\documentclass[11pt,a4paper]{amsart}
\usepackage{amsmath} 
\usepackage{lmodern}
\usepackage{amsmath, amssymb}
\usepackage[all]{xy}
\usepackage[utf8]{inputenc}
\usepackage[english]{babel}
\usepackage{graphicx}
\usepackage{mathtools} 
\usepackage{pifont}

\newtheorem{definition}{Definition}
\newtheorem{proposition}{Proposition}
\usepackage{hyperref}
\usepackage{fancyhdr}
\usepackage{amsthm}

\newtheorem{thm}{Theorem}[section]
\newtheorem{thmx}{Theorem}

\newtheorem{pro}{Proposition}[section]

\newtheorem{Def}{Definition}[section]

\newtheorem{lem}{Lemma}[section]

\begin{document}
\title{On the distribution of the periods of convex representations II}
\author{Abdelhamid Amroun}
\address{Laboratoire de Math\'ematiques d'Orsay, Univ. Paris-Sud, CNRS UMR 8628, Universit\'e Paris-Saclay, 91405 Orsay, France}
\email{abdelhamid.amroun@universite-paris-saclay.fr}

\begin{abstract} Let $\rho : \Gamma \longrightarrow G$ be a Zariski dense irreducible convex  representation of the hyperbolic group $\Gamma$, where $G$ is a  connected real semisimple algebraic Lie group. We establish a central limit type theorem for the periods of the representation $\rho$.
\end{abstract}
\maketitle
\tableofcontents
\section{Introduction and main results}
Asymptotic counting results are proved in \cite{samb1,samb3} for a class of convex representations $\rho : \Gamma \longrightarrow G$ of hyperbolic groups $\Gamma$ into semisimple algebraic Lie groups $G$. Among these results, Sambarino proved a prime orbit type theorem and an equidistribution result for the periods of the representation $\rho$.
By exploiting these results, we establish in this paper a central limit theorem for the periods of a class of $P$-convex representaions $\rho$ (see Definition \ref{def1}), where $P$ is any parabolic subgoup of $G$ (when $P$ is minimal, then the representation $\rho$ is hyperconvex, in the sens of \cite{samb3}). A more formal picture of the nature of the central limit type theorem we prove here, reads as follows. Let $Per_t(\rho)$ be the set of periods $\tau(\rho)$ of the representation $\rho$ (see below for precise definitions) such that $\tau(\rho)\leq t$. Then, for a certain class of observables $\varphi$, one can find constants $c_t^{\varphi}>0$, $L$ and $\sigma >0$  such that, for all $a, b\in \mathbb{R}$, 
\[
c_t^{\varphi}\#\{\tau(\rho)\in Per_t(\rho): \frac{\varphi(\tau(\rho))-Lt}{\sigma \sqrt{t}}\in [a,b]\}\longrightarrow \frac{1}{\sqrt{2\pi}}\int_a^b e^{\frac{x^2}{2}}dx,
\]as $t\rightarrow \infty$.
The constant $c_t^{\varphi}$ is the exponential growth rate of the set $Per_t(\rho)$ and satisfies, 
\[
c_t^{\varphi}\# Per_t(\rho) \longrightarrow 1, \ as \
t\rightarrow \infty.
\]
Let $\Gamma$ be a torsion-free discrete cocompact subgroup of the isometry group $Isom(\tilde{M})$ of a simply connected complete manifold $\tilde{M}$ of sectional curvature less or equal to $-1$. The group $\Gamma$ is hyperbolic and it's boundary $\partial \Gamma$ is naturally identifiefd with the geometric boundary $\partial \tilde{M}$ of $\tilde{M}$ \cite{Ghys}.
We consider $G$ a connected real semisimple algebraic Lie group with Lie algebra $\mathfrak{g}$. The group $G$ is then reductive, which means that it's unipotent radical $R_u(G)$ is trivial (examples are the linear groups $GL(n, \mathbb{R})$, $SL(n, \mathbb{R})$ or any torus). 

We are concerned by the irreducible representations $\rho : \Gamma \longrightarrow G$ of the hyperbolic groups $\Gamma$. More precisely, we are interested in representations $\rho$ which have equivariant (with some other additional conditions) maps $\psi : \partial \Gamma \rightarrow G/ P$, where $P$ is a proper parabolic subgroup of $G$ i.e. $P$ includes a Borel subgroup of $G$ or equivalently, the homogenuous space $G/ P$ is a complete variety \cite{milne}. When $P$ is a minimal parabolic subgroup of $G$, which is equivalent to say that $P$ is a Borel subgroup \cite{milne}, then, thanks to the Iwasawa decomposition, $\mathcal{F}:=G/ P$ is identified with the Furstenberg boundary $\partial_{\infty}X :=K/M$ of the Riemannian symetric space $X=G/K$ of $G$, where $K$ is a (maximal) isotopy subgroup of $G$ (see \cite{wa}). Recall that $X$ has a nonpositive curvature and $\partial_{\infty}X$ is the visual boundary of $X$ at infinity.
The parabolic subgroup $P$ is the stabilizer in $G$ of some point in $\partial_{\infty} X$. For example, when $G=PGL(d, \mathbb{R})$, then $\mathcal{F}$ is the space of complete flags of $\mathbb{R}^d$ with stabilizer, the projectivised of the group $M$ of diagonal matrices with $\pm 1$ on the diagonal.

It is usefull to describe parabolic subgroups via root systems. Let $\Delta$ be a system of simple roots of $G$ determined by $T$ and a Borel subgroup $B$ of $G$. There are $2^{|\Delta|}$ parabolic subgroups $P_{\theta}$ containing $B$  and corresponding to subsets $\theta$ in $\Delta$. The subgroup $P_{\Delta}$ corresponding to $\theta = \Delta$ is a minimal parabolic subgroup. 

Now, fix a maximal torus $T$ in $G$ i.e. $T$ is abelian and consists of semisimple elements and $T$ is not contained in an other torus with these properties. As for Borel and parabolic subgroups of $G$, all maximal tori are conjugated in $G$. Let $P$ be a parabolic subgroup of $G$ and $P^{-}$ it's opposite. There exists two root groups i.e. $1$-dimensional subgroups of $G$ normalized by $T$ such that $P=LU$ and $P^{-}=LU^{-}$ where $L=P\cap P^{-}$ is the unique Levi subgroup of $P$ (and $P^{-}$) containing $T$. The Levi factor $L$ is a connected reductive subgroup of $P$. It is clear that $L$ is the stabiliser in $G$ of the pair $(P, P^{-})$ so that the quotient space $G/L$ is the unique open $G$-orbit in the set $G/P \times G/P^{-}$. 
We set in the sequel, $\mathcal{F}_P = G/P$ and $\mathcal{F}_{P}^{2} = G/P^{-} \times G/P$.
 
Let $\mathfrak{a}_P$ be the Lie algebra of the center of the reductible group $L$ and set $\mathfrak{a} =\mathfrak{a}_{\Delta}$. 
Let $\mathfrak{J}_P : \mathfrak{a} \rightarrow \mathfrak{a}_P$ be the unique projection map $\mathfrak{J}_P : \mathfrak{a} \rightarrow \mathfrak{a}_P$ invariant under the subgroup $W_P$ of the Weyl group $W$, of elements $w$ which leave invariant $\mathfrak{a}_P$, i.e. such that $w(\mathfrak{a}_P)=\mathfrak{a}_P$. 
Thanks to the Iwasaw's decomposition $G=K \exp (\mathfrak{a}) N$, where $N$ is the unipotent radical of $P_{\Delta}$, there is a well defined map $\sigma=\sigma_{\Delta} : G \times \mathcal{F} \rightarrow \mathfrak{a}$, such that $gk=l \exp(\sigma (g, kM)) n$.
By a result of Quint \cite{Q}, the map $\mathfrak{J}_P\circ \sigma : G\times \mathcal{F} \rightarrow \mathfrak{a}_P$ factors through a map (called the Busemann-Iwasawa cocycle) $\sigma_P : G\times \mathcal{F}_P \rightarrow \mathfrak{a}_P$ which satisfy a cocycle equation, $\sigma_P(gh, x)=\sigma_P(g, hx)+\sigma_P(h, x)$ for every $g, h  \in G$ and $x\in \mathcal{F}_P$. 
Consequently, this leads to the following definition of a pair of dynamical cocycles $\beta_P : \Gamma \times \partial \Gamma \rightarrow \mathfrak{a}_P$, and $\beta_{P^{-}} : \Gamma \times \partial \Gamma \rightarrow \mathfrak{a}_{P^{-}}$ for the representation $\rho : \Gamma \rightarrow G$:    
\[
\beta_P (\gamma, x)=\sigma_P (\rho (\gamma), \xi^{+} (x)),  \
\beta_{P^{-}} (\gamma, x)=\sigma_P (\rho (\gamma), \xi^{-} (x)).
\]
By definition, the periods of the representation $\rho$ are the periods of the cocycle $\beta_P$. These are defined as the images $\beta_P (\gamma, \gamma^+)$, $\gamma \in \Gamma$, where $\gamma^+ \in \partial \Gamma$ is the attractive fixed point of $\gamma$. The periods of $\beta_P$ and $\beta_{P^{-}}$ are related by $i\beta_{P^{-}} (\gamma, \gamma^{-})=\beta_P (\gamma, \gamma^+)$, where $i :\mathfrak{a} \longrightarrow \mathfrak{a}$ is the opposition involution. By a result of Benoist \cite{Be}, for a Zariski dense\footnote{i.e. $\rho(\Gamma)$ is Zariski dense in $G$} $P$-convex representation $\rho : \Gamma \rightarrow G$, the closed cone containing the set $\mathcal{L}_P:=\{\beta_P (\gamma, \gamma^+) : \gamma \in \Gamma\}$ is convex and has nonempty interior. The dual cone $\mathcal{L}^{*}_{P}$ of linear forms $\varphi \in \mathfrak{a}^{*}$, such the restriction $\varphi_{|\mathcal{L}_P} \geq 0$, has a nonempty interior (see Sect. $3$). We introduce the set,
\[
[\Gamma]_t^{\varphi} = \{[\gamma]\in [\Gamma]: \varphi(\beta_P (\gamma, \gamma^+)) \leq t\}.
\]
We set in the sequel
$\mathcal{N}(a,b) :=\frac{1}{\sqrt{2\pi}} 
\int_{a}^{b} e^{-\frac{x^2}{2}}dx, \ a,b \in \mathbb{R}$.
The main results of this paper are the following.
\begin{thmx}[Theorem \ref{T4}]\label{A}
Let $\rho : \Gamma \rightarrow G$ be Zariski dense and $P$-convex irreducible representation, and fix $\varphi$ in the interior of $\mathcal{L}_P^{*}$, where $P$ is a parabolic subgroup of $G$.
There exist $L_P^{\varphi}$ and $\sigma_P^{\varphi} >0$ such that,
\[
h_{\varphi}te^{-h_{\varphi}t} 
\#  \{ [\gamma]\in [\Gamma]_t^{\varphi}: \frac{\varphi(\beta_P (\gamma, \gamma^+))-tL_P^{\varphi}}{\sigma_P^{\varphi} \sqrt{t}}\in [a,b]  \}
\rightarrow \mathcal{N}(a, b),  
\]as  $t\rightarrow \infty$.
\end{thmx}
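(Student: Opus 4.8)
\emph{Plan of proof.} The plan is to transport the statement into the thermodynamic formalism of the geodesic flow of $M=\Gamma\backslash\tilde M$ and then run a transfer–operator argument carrying an auxiliary complex parameter. \emph{Step 1 (coding and reparametrisation).} Since $\Gamma$ is torsion free, discrete and cocompact in $\mathrm{Isom}(\tilde M)$ with $\tilde M$ of curvature $\le -1$, the geodesic flow $(g_{t})$ on $T^{1}M$ is a topologically mixing Hölder (Anosov-type) flow whose periodic orbits are canonically indexed by the nontrivial classes $[\gamma]\in[\Gamma]$, and by Bowen–Ratner it is Hölder semiconjugate to a suspension flow over a topologically mixing subshift of finite type $(\Sigma,\sigma)$ with Hölder roof. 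Using Quint's cocycle $\sigma_{P}$ \cite{Q} and the convexity of $\rho$ one realises $[\gamma]\mapsto\varphi(\beta_{P}(\gamma,\gamma^{+}))$ as an orbital period: following Sambarino \cite{samb1,samb3} there is a Hölder function $f_{\varphi}$ on $T^{1}M$, cohomologous to a strictly positive function (this positivity is where $\varphi\in\mathrm{int}\,\mathcal{L}_{P}^{*}$ is used), whose integral over the closed orbit $c_{\gamma}$ equals $\varphi(\beta_{P}(\gamma,\gamma^{+}))$. Let $\phi^{\varphi}$ be the reparametrised flow attached to $f_{\varphi}$, $h_{\varphi}$ its topological entropy and $m_{\varphi}$ its measure of maximal entropy; Sambarino's prime orbit theorem gives $\#[\Gamma]_{t}^{\varphi}\sim e^{h_{\varphi}t}/(h_{\varphi}t)$, which accounts exactly for the prefactor $h_{\varphi}t\,e^{-h_{\varphi}t}$ in the statement.

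\emph{Step 2 (the flow–level CLT).} Write the quantity whose fluctuations are measured as an orbital integral $\Phi(\gamma)=\int_{c_{\gamma}}\theta$ of a Hölder function $\theta$ on $T^{1}M$ — in the statement, $\Phi(\gamma)$ is the linear functional of $\beta_{P}(\gamma,\gamma^{+})$ appearing in the numerator — and normalise $\theta$ so that $\Phi(\gamma)/\varphi(\beta_{P}(\gamma,\gamma^{+}))\to L_{P}^{\varphi}:=\int\theta\,dm_{\varphi}$ along $m_{\varphi}$-generic orbits of $\phi^{\varphi}$ (using Bowen–Margulis equidistribution of closed orbits). The central limit theorem for equilibrium states of hyperbolic flows — via the spectral gap of the Ruelle operator on $(\Sigma,\sigma)$, or directly from a Green–Kubo estimate — produces $(\sigma_{P}^{\varphi})^{2}:=\lim_{T\to\infty}\tfrac1T\int\big(\int_{0}^{T}(\theta-L_{P}^{\varphi})\circ\phi_{s}^{\varphi}\,ds\big)^{2}dm_{\varphi}$ and a Gaussian limit at scale $\sqrt{T}$ for the Birkhoff integrals of $\theta-L_{P}^{\varphi}$. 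The positivity $\sigma_{P}^{\varphi}>0$ is where Zariski density and irreducibility enter: were $\theta-L_{P}^{\varphi}$ a Hölder coboundary, every period would be pinned to $L_{P}^{\varphi}\,\varphi(\beta_{P}(\gamma,\gamma^{+}))$, forcing $\mathcal{L}_{P}$ into a proper affine subspace and contradicting Benoist's theorem \cite{Be} that the limit cone has nonempty interior.

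\emph{Step 3 (from the measure CLT to a counting CLT).} This is the technical core. I would introduce the twisted Dirichlet series $\eta(s,\xi)=\sum_{[\gamma]}e^{-s\,\varphi(\beta_{P}(\gamma,\gamma^{+}))}\,e^{i\xi\Phi(\gamma)}$ and express it, via the symbolic reduction of Step 1, through the complex transfer operator $\mathcal{L}_{s,\xi}$ on $(\Sigma,\sigma)$ built from $-s f_{\varphi}+i\xi\theta$. Analytic perturbation theory (Kato) around $\xi=0$ gives a simple leading eigenvalue $\lambda(s,\xi)$, hence a simple pole of $\eta$ at $s=s(\xi)$ with the expansion $s(\xi)=h_{\varphi}+i L_{P}^{\varphi}\,\xi-\tfrac12(\sigma_{P}^{\varphi})^{2}\,\xi^{2}+o(\xi^{2})$; this quadratic term, carrying the variance, is the heart of the proof. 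Because the length spectrum of $\phi^{\varphi}$ is non‑arithmetic (again a consequence of Zariski density), a quantitative Ikehara–Delange Tauberian theorem, in the uniform‑in‑$\xi$ form of Lalley / Parry–Pollicott / Pollicott–Sharp — after killing the remaining part of the contour using the spectral gap — yields $\sum_{\varphi(\beta_{P}(\gamma,\gamma^{+}))\le t}e^{i\xi\Phi(\gamma)}\sim e^{s(\xi)t}/(s(\xi)t)$, uniformly for $\xi$ in a fixed neighbourhood of $0$. Setting $\xi=\zeta/(\sigma_{P}^{\varphi}\sqrt{t})$, dividing by $\#[\Gamma]_{t}^{\varphi}$, recentring by $e^{-i\xi L_{P}^{\varphi}t}$, and using $e^{s(\xi)t}=e^{h_{\varphi}t}\,e^{i\zeta L_{P}^{\varphi}\sqrt{t}/\sigma_{P}^{\varphi}}\,e^{-\zeta^{2}/2+o(1)}$, the left‑hand side becomes the characteristic function of $(\Phi(\gamma)-L_{P}^{\varphi}t)/(\sigma_{P}^{\varphi}\sqrt{t})$ under the counting probability on $[\Gamma]_{t}^{\varphi}$, and it converges to $e^{-\zeta^{2}/2}$; Lévy's continuity theorem, plus the routine passage from convergence of characteristic functions to convergence of the distribution function on each interval $[a,b]$, gives the limit $\mathcal{N}(a,b)$.

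I expect the main obstacle to be Step 3: one must make the transfer–operator/Tauberian machinery quantitative and, crucially, \emph{uniform in the oscillatory parameter} $\xi$ on a fixed neighbourhood of $0$, so that the expansion of $s(\xi)$ can legitimately be fed into the asymptotics, and one must control the contribution of the classes with $\varphi(\beta_{P}(\gamma,\gamma^{+}))$ within $o(t)$ of $t$ — precisely the range where the $\sqrt{t}$–fluctuations live — by an a priori large‑deviation bound. Everything upstream (the coding, Sambarino's reparametrisation and prime orbit theorem from \cite{samb1,samb3}, and the abstract CLT for equilibrium states of hyperbolic flows) is either classical or already available, so the real work is packaging these inputs into the uniform two‑parameter estimate above.
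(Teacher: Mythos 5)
Your Step~1 is exactly the paper's reduction: Ledrappier's correspondence (Theorem~\ref{L}) together with Proposition~\ref{Prop3} converts the cocycle $\varphi\circ\beta_P$ into a positive H\"older function on $T^1M$ with the same periods; Sambarino's reparametrisation theorem (Theorem~\ref{samb}) identifies those periods with the primitive periods of a H\"older reparametrised geodesic flow on $\Gamma\backslash\partial^2\Gamma\times\mathbb{R}$; and Sambarino's prime orbit theorem (Theorem~\ref{samba}) supplies the prefactor $h_\varphi t\,e^{-h_\varphi t}$. At this point the paper stops: it cites the Cantrell--Sharp central limit theorem for periodic orbits of hyperbolic flows (Theorem~\ref{sc}) as a black box, checking only the two hypotheses that the flow is a reparametrised Anosov geodesic flow and that it admits no cross section (the latter by the Preissman argument, so the period group is dense). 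The whole of the paper's Theorem~\ref{T4} is thus one sentence once Theorem~\ref{T1} is established, and Theorem~\ref{T1} is itself a three-line reduction to Theorems~\ref{samba} and~\ref{sc}.

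Your Steps~2 and~3 are not wrong, but they re-derive Cantrell--Sharp from scratch: a measure-level CLT from the spectral gap of the Ruelle operator, then a passage to a counting CLT via a two-parameter (complex, twisted) transfer operator, analytic perturbation theory, and a uniform Ikehara--Delange Tauberian argument. You have correctly located the genuine technical difficulty (uniformity in the oscillatory parameter $\xi$, requiring a spectral gap away from $\xi=0$ coming from non-arithmeticity of the length spectrum), but this is precisely the content of \cite{cs}, which the paper does not reprove. If the goal is to recover the paper's argument, replace Steps~2--3 by a single appeal to Theorem~\ref{sc}. Two further small points: (i) your justification of $\sigma_P^\varphi>0$ via Benoist's cone theorem is not the paper's route, which instead goes through Proposition~\ref{Prop3}(2) (cohomology to a \emph{positive} function) feeding directly into Cantrell--Sharp's non-coboundary hypothesis; your Benoist argument would be the natural one for a CLT involving a \emph{second} linear form (as in Theorem~\ref{B}), but here the observable and the ordering functional coincide, so the coboundary/cone heuristic does not transfer literally. (ii) You omit the explicit use of Ledrappier's correspondence (Theorem~\ref{L}), which is the device the paper uses to pass from the abstract cocycle $\varphi\circ\beta_P$ to a function $\varphi\circ F_P$ on $T^1M$ with identical periods; Sambarino's reparametrisation theorem alone does not produce this function.
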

Fix some weyl chamber $\mathfrak{a}^+$ in $\mathfrak{a}$ and let $a : G \longrightarrow \mathfrak{a}$ be the Cartan projection corresponding to the decomposition $G=K\exp(\mathfrak{a}^+) K$.
Set $a_P:= \mathfrak{J}_P  \circ a : G \longrightarrow \mathfrak{a}$.
\begin{thmx}[Theorem \ref{TT4}]\label{B}
Let $\rho : \Gamma \rightarrow G$ be Zariski dense and $P$-convex irreducible representation, and fix $\varphi$ in the interior of $\mathcal{L}_P^{*}$, where $P$ is a parabolic subgroup of $G$. Then,
\[
h_{\varphi}te^{-h_{\varphi}t} \#  \{[\gamma] \in [\Gamma]_t^{\varphi}: \frac{ \varphi(a_P((\rho\gamma)) )-tL_P^{\varphi}}{\sigma_P^{\varphi} \sqrt{t}}\in [a,b] \}
\rightarrow
\mathcal{N}(a, b),
\]
as $t\rightarrow \infty$, where $L_P^{\varphi}$ and $\sigma_P^{\varphi}$ are the constants of Theorem \ref{A}.
\end{thmx}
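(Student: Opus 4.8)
The plan is to derive Theorem~\ref{B} from Theorem~\ref{A} by comparing, along each conjugacy class, the quantity $\varphi(a_P(\rho(\gamma)))$ with the period $\varphi(\beta_P(\gamma,\gamma^{+}))$, which is already controlled by Theorem~\ref{A}. The first step is to identify the period with a Jordan projection. Since $\rho$ is $P$-convex, the equivariant map $\xi^{+}$ sends the attracting fixed point $\gamma^{+}\in\partial\Gamma$ to the attracting fixed point of the $P$-proximal element $\rho(\gamma)$ in $\mathcal{F}_P$; evaluating the Busemann--Iwasawa cocycle there, Quint's identity \cite{Q} gives
\[
\beta_P(\gamma,\gamma^{+})=\sigma_P(\rho(\gamma),\xi^{+}(\gamma^{+}))=\mathfrak{J}_P\big(\lambda(\rho(\gamma))\big),
\]
where $\lambda:G\to\mathfrak{a}^{+}$ denotes the Jordan projection. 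Hence the periods are exactly the $\mathfrak{a}_P$-valued Jordan projections of $\rho(\Gamma)$, whereas $a_P=\mathfrak{J}_P\circ a$ is built from the Cartan projection, and the matter reduces to comparing $a$ with $\lambda$ on $\rho(\Gamma)$.

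The second, and main, step is a uniform comparison: I would prove that there is a constant $C=C(\rho)>0$ such that every class $[\gamma]\in[\Gamma]$ has a representative $\gamma$ with $\|a_P(\rho(\gamma))-\beta_P(\gamma,\gamma^{+})\|\le C$. For the finitely many classes of bounded translation length this is automatic. For the remaining ones I would use that $\Gamma$ acts cocompactly on $\tilde M$, so that each class has a representative whose translation axis meets a fixed compact set, i.e.\ passes within a bounded distance of the base point; the convexity of $\rho$ --- which makes $\rho$ a quasi-isometric embedding, with continuous equivariant boundary maps satisfying the transversality and contraction conditions of Definition~\ref{def1}, --- then turns this geometric closeness into a uniform bound on the gap between the Cartan and Jordan projections of $\rho(\gamma)$ in $\mathfrak{a}$, hence in $\mathfrak{a}_P$ after applying the linear map $\mathfrak{J}_P$. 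This is the quantitative comparison already used by Sambarino in \cite{samb1,samb3} (in the spirit of Benoist \cite{Be}); I would fix once and for all such a choice of representative for every class and evaluate $a_P(\rho(\gamma))$ on it.

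The third step is a routine squeezing argument. With $C'=\|\varphi\|\,C$ we have $|\varphi(a_P(\rho(\gamma)))-\varphi(\beta_P(\gamma,\gamma^{+}))|\le C'$ for the chosen representatives, so, writing $L=L_P^{\varphi}$ and $\sigma=\sigma_P^{\varphi}$, for each $\varepsilon>0$ and all $t$ with $C'<\varepsilon\sigma\sqrt{t}$ one has, as subsets of $[\Gamma]_t^{\varphi}$,
\[
\Big\{\tfrac{\varphi(\beta_P(\gamma,\gamma^{+}))-tL}{\sigma\sqrt t}\in[a+\varepsilon,b-\varepsilon]\Big\}\ \subseteq\ \Big\{\tfrac{\varphi(a_P(\rho(\gamma)))-tL}{\sigma\sqrt t}\in[a,b]\Big\}\ \subseteq\ \Big\{\tfrac{\varphi(\beta_P(\gamma,\gamma^{+}))-tL}{\sigma\sqrt t}\in[a-\varepsilon,b+\varepsilon]\Big\}.
\]
Multiplying cardinalities by $h_{\varphi}te^{-h_{\varphi}t}$ --- precisely the normalization of Theorem~\ref{A}, over the same indexing set $[\Gamma]_t^{\varphi}$, which is defined solely through $\beta_P$ --- and applying Theorem~\ref{A} to the two outer sets, I obtain
\[
\mathcal{N}(a+\varepsilon,b-\varepsilon)\ \le\ \liminf_{t\to\infty}(\ast_t)\ \le\ \limsup_{t\to\infty}(\ast_t)\ \le\ \mathcal{N}(a-\varepsilon,b+\varepsilon),
\]
where $(\ast_t)$ is the normalized count appearing in Theorem~\ref{B}. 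Letting $\varepsilon\downarrow0$ and invoking the continuity of $(a,b)\mapsto\mathcal{N}(a,b)$ gives $(\ast_t)\to\mathcal{N}(a,b)$, which is the assertion of Theorem~\ref{B}.

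The step I expect to be the real obstacle is the uniform comparison of the second step: one must make sure that $C$ is genuinely independent of the conjugacy class, in particular that classes for which the attracting and repelling data of $\rho(\gamma)$ are nearly in special position do not escape the bound. This is exactly where the $P$-convexity hypothesis --- uniform contraction along the equivariant boundary maps --- is needed rather than mere faithfulness or discreteness; granting it, the passage from Theorem~\ref{A} to Theorem~\ref{B} is purely formal.
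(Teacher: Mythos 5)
Your proposal is correct and follows essentially the same route as the paper: identify the periods with $\lambda_P(\rho\gamma)$ via Proposition~\ref{Prop2}, obtain a uniform (up to finitely many exceptional classes) bound on $\varphi(a_P(\rho\gamma))-\varphi(\lambda_P(\rho\gamma))$ along well-chosen conjugacy representatives, and then squeeze against Theorem~\ref{A}. The paper makes your second step precise via Benoist's estimate (Proposition~\ref{be}) relating $a_P(g)-\lambda_P(g)$ to the Gromov product $\mathcal{G}_P(g_-^P,g_+^P)$ for $(r,\epsilon)$-proximal $g$, combined with Proposition~\ref{sam} and Lemma~\ref{UD} to control the exceptional set, which is exactly the mechanism you flag as the real obstacle.
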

Let us explain the constants $L_P^{\varphi}$ and $\sigma_P^{\varphi}$.  
By the correspondance Theorem \ref{L} of Ledrappier, the cocycle $c_P^{\varphi}$ is cohomologus to a Hölder continuous function $F_P^{\varphi}$ with the same periods. Let $\mu_P^{\varphi}$ be the measure of maximal entropy of the Anosov flow $\psi^t :\Gamma\backslash \partial \Gamma^2 \times \mathbb{R} \circlearrowleft$ obtained as the reparametrisation, by $c_P^{\varphi}:=\varphi\circ \beta_P : \Gamma \times \partial \Gamma \longrightarrow \mathbb{R}$ (or similarily $F_P^{\varphi}$) of the geodesic flow on $\Gamma\backslash \partial \Gamma^2 \times \mathbb{R}$ (see Sect. $3.2$). Thus, applying Theorem \ref{sc} of Cantrell and Scharp (see Sect. $3.3$), we obtain that $L_P^{\varphi}=\int F_P^{\varphi} d\mu_P^{\varphi}$ and
\[
\sigma_P^{\varphi} =\lim_{t\rightarrow+\infty}\int \left(\int_0^{t}F_P^{\varphi} (\psi^t (x))dt-L_P^{\varphi}t \right )^2d\mu_P^{\varphi} (x).
\]
The normalization by the factor $h_{\varphi}te^{-h_{\varphi}t}$ comes from the prime orbit theorem (see Sect. 6, Theorem \ref{samba}).
\section{Convex representations}
We begin by this definition.
\begin{Def} [Sambarino \cite{samb1}]\label{def1}
We say that a representatio $\rho : \Gamma \longrightarrow G$ is $P$-convex,  where $P$ is a parabolic subgroup of $G$, if there exist two $\rho$-equivariant Hölder continuous maps $\xi^- : \partial \Gamma \longrightarrow \mathcal{F}_{P^{-}}$ and $\xi^+ : \partial \Gamma \longrightarrow \mathcal{F}_{P}$ such that, if $x\ne y$ are distinct points in $\partial \Gamma$, then the pair $(\xi^- (x), \xi^+ (y))$ belongs to $G/L$, i.e. $(\xi^- (x), \xi^+ (y))$ is a $G$-orbite in $\mathcal{F}_{P^{-}} \times \mathcal{F}_{P}$. The representation is said hyperconvex if $P$ is minimal, i.e. $P=P_{\Delta}$. In this special case we have $P=P^-$ and $\xi^- =\xi^+$.
\end{Def}

In the case of rational representations $\rho$, i.e. for $G=PGL(d, \mathbb{R})$, the parabolic group $P$ (resp $P^{-}$) is the subgroup of lower (resp upper) block triangular matricies, and $L$ is the subgroup of block diagonal matricies. Then $\mathcal{F}_P $ is the variety of the partial flags of $\mathbb{R}^d$. If in place of block triangular matrices we just take triangular ones, then $P$ is the (minimal) parabolic subgroup of $G$ of lower triangular matrices and $\mathcal{F}=G/P$ is the space of complete flags in $\mathbb{R}^d$. 

A rational representation $\rho : \Gamma \longrightarrow PGL(V)$ is said strictly convex (\cite{samb1} Definition 1.1), if there exists a $\rho$-equivariant Hölder continuous maps,
\[
\xi :\partial \Gamma \longrightarrow \mathbb{P}(V), \ and \ \eta:  \partial \Gamma \longrightarrow \mathbb{P}(V^*),
\]
such that $\xi (x)\oplus \eta (y) =V$ for all $x\ne y$ in $\partial \Gamma$.

Consider a Zariski dense hyperconvex representation $\rho : \Gamma \longrightarrow G$ (i.e. $P_{\Delta}$-convex) with equivariant map $\xi: \partial \Gamma \longrightarrow \mathcal{F}_{P_{\Delta}}$. 
By Chevalley's theorem \cite{hum}, there is a finite dimensional irreducible representation $\tilde{\rho} : G \longrightarrow PGL(V)$ and a line $L \subset V$ such that $P_\Delta$ fixes $L$, i.e. such that $\tilde{\rho} (g)L=L$ for all $g\in P_{\Delta}$. In other words, $P_{\Delta}$ is the isotropy subgroup of $G$ of the point $[L] \in \mathbb{P}(V)$. This defines an equivariant map $\zeta :\mathcal{F}_{P_{\Delta}} \longrightarrow \mathbb{P}(V)$, $\zeta (gP_\Delta)=\tilde{\rho} (g)[L]$. Considering the dual representation $\tilde{\rho}^* : G \longrightarrow \mathbb{P}(V^*)$, defined by $\tilde{\rho}^* (g)\phi = \phi\circ \rho (g^{-1})$ for all $g\in G, \phi \in \mathbb{P}(V^*)$, one obatains an equivariant map $\zeta^* :\mathcal{F}_{P_{\Delta}} \longrightarrow \mathbb{P}(V^*)$. 
The maps 
\[
\partial \Gamma \stackrel{ \text{$\xi$} }{\longrightarrow} \mathcal{F}_{P_{\Delta}} \stackrel{ \text{$\zeta$} }{\longrightarrow} \mathbb{P}(V),\ and \
\partial \Gamma \stackrel{ \text{$\xi$} }{\longrightarrow} \mathcal{F}_{P_{\Delta}} \stackrel{ \text{$\zeta^*$} }{\longrightarrow} \mathbb{P}(V^*),
\]
are then equivariant. The equivariant boundary maps $\zeta \circ \xi$ and $\zeta^* \circ \xi$ are in fact all Hölder continuous.
Since $\rho$ is Zariski dense, which means that $\rho (\Gamma)$ is Zariski dense in $G$, the representation $\tilde{\rho}\circ \rho$ is irreducible. The representation $\tilde{\rho}\circ \rho : \Gamma \longrightarrow PGL(V)$ is then  strictly convex with equivariant maps $\zeta \circ \xi$ and $\zeta^* \circ \xi$.

A rational representation $\tilde{\rho} : G \longrightarrow PGL(V)$ is said proximal if the eigenspace associated to the highest weight of $\rho$ is one dimensional. By a result of Tits \cite{tits}, given a parabolic subgroup $P$ of $G$, then $\rho (P)$ stabilizes a unique line in $V$. This defines a unique equivariant map $\mathcal{F}_P \longrightarrow \mathbb{P}(V)$. The dual representation $\tilde{\rho}^* : G \longrightarrow \mathbb{P}(V^*)$, is also irreducible and proximal. This defines again uniquely an equivariant map $\mathcal{F}_P \longrightarrow \mathbb{P}(V^*)$.
Consequently, if $\rho : \Gamma \longrightarrow G$ is $P$-convex Zariski dense irreducible representation, and $\tilde{\rho} : G \longrightarrow PGL(V)$ proximal, then $\tilde{\rho} \circ\rho$ is striclty convex.

More generally, if $\rho : \Gamma \longrightarrow G$ is Zariski dense and $P$-convex representation and $\tilde{\rho} : G \longrightarrow PGL(V)$ is a finite dimensional irreducible proximal representation, then the composition $\tilde{\rho}\circ \rho : \Gamma \longrightarrow PGL(V)$ is a strictly convex representation \cite{samb3}. The existence of $\tilde{\rho}$ results from Tit's theorem \cite{tits}.

They are many important exemples of strictly convex representations (see \cite{samb1}). Among these we find the class of Hitchin representations of surface groups. Moreover, the composition of a Zariski dense Hitchin representation $\Gamma \longrightarrow PGL(d, \mathbb{R})$
with some irreducible represenation  $PGL(d, \mathbb{R}) \longrightarrow PGL(k, \mathbb{R})$ is stricly convex.

\section{Cocycles, Periods and Gromov products}
For some general facts on Hölder cocycles related to the action of $\Gamma$ on $\partial \Gamma$, see Sect. $5$. For the purpose of this section, we concentrate on those ones related to the representation $\rho$.
\subsection{The Busemann-Iwasawa cocycle} Let $\mathfrak{a}_P$ be the Lie algebra of the center of the reductible group $L$ and set $\mathfrak{a} =\mathfrak{a}_{\Delta}$. The  subalgebra $\mathfrak{a}$ is a maximal abelian subspace contained in the $(-1)$-eigenspace of the Cartan involution on $\mathfrak{g}$. We will also need the unique projection map $\mathfrak{J}_P : \mathfrak{a} \rightarrow \mathfrak{a}_P$ invariant under the subgroup $W_P$ of the Weyl group $W$ (of $G$ relative to $T$) of elements $w$ which leave invariant $\mathfrak{a}_P$, i.e. such that $w(\mathfrak{a}_P)=\mathfrak{a}_P$.

As for the rational representations, one can associate to the (convex) representation $\rho : \Gamma \rightarrow G$ a cocycle map as follows \cite{Q}\cite{samb3}. 
Thanks to the Iwasaw's decomposition $G=K \exp (\mathfrak{a}) N$, where $N$ is the unipotent radical of $P_{\Delta}$, there is a well defined map $\sigma=\sigma_{\Delta} : G \times \mathcal{F} \rightarrow \mathfrak{a}$, such that $gk=l \exp\sigma (g, kM) n$.
Then, by a result of Quint \cite{Q}, the map $\mathfrak{J}_P\circ \sigma : G\times \mathcal{F} \rightarrow \mathfrak{a}_P$ factors through a map $\sigma_P : G\times \mathcal{F}_P \rightarrow \mathfrak{a}_P$ which satisfy a cocycle equation, $\sigma_P(gh, x)=\sigma_P(g, hx)+\sigma_P(h, x)$ for every $g, h  \in G$ and $x\in \mathcal{F}_P$. Consequently, this leads to the following definition of a pair of dynamical cocycles $\beta_P : \Gamma \times \partial \Gamma \rightarrow \mathfrak{a}_P$, and $\beta_{P^{-}} : \Gamma \times \partial \Gamma \rightarrow \mathfrak{a}_{P^{-}}$ for the representation $\rho : \Gamma \rightarrow G$:    
\[
\beta_P (\gamma, x)=\sigma_P (\rho (\gamma), \xi^{+} (x)),  \
\beta_{P^{-}} (\gamma, x)=\sigma_P (\rho (\gamma), \xi^{-} (x)).
\]
\subsection{Periods} 
Fix some weyl chamber $\mathfrak{a}^+$ in $\mathfrak{a}$ and let $a : G \longrightarrow \mathfrak{a}$ the Cartan projection corresponding to the decomposition $G=K\exp(\mathfrak{a}^+) K$.
Let $\lambda : G\rightarrow \mathfrak{a}^+$ be the Jordan projection, and
define $\lambda_P = \mathfrak{J}_P  \circ \lambda : G \rightarrow \mathfrak{a}_P$.

By definition, the periods of the cocycle $\beta_P$ are the images of the function $\gamma \in \Gamma \rightarrow \beta_P (\gamma, \gamma^+)$.
The periods of $\beta_P$ and $\beta_{P^{-}}$ are related by $i\beta_{P^{-}} (\gamma, \gamma^{-})=\beta_P (\gamma, \gamma^+)$, where $i :\mathfrak{a} \longrightarrow \mathfrak{a}$ is the opposition involution.
We have the following result.
\begin{pro}[\cite{Q} \cite{samb3}]\label{Prop2}
Let $\rho : \Gamma \rightarrow G$ be Zariski dense and $P$-convex representation, where $P$ is a parabolic subgroup of $G$. We have,
\[
\beta_P (\gamma, \gamma^+) =\lambda_P(\rho\gamma), \ \forall \gamma \in \Gamma.
\]
\end{pro}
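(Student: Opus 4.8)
The plan is to pass from the single value $\beta_P(\gamma,\gamma^+)$ to an asymptotic quantity using the cocycle property, and then to identify that quantity with the Jordan projection through the standard comparison between the Busemann--Iwasawa cocycle and the Cartan projection. First I would note that the cocycle relation $\sigma_P(gh,x)=\sigma_P(g,hx)+\sigma_P(h,x)$, together with the equivariance of $\xi^+$, makes $\beta_P$ a cocycle over the $\Gamma$-action on $\partial\Gamma$, i.e. $\beta_P(\gamma_1\gamma_2,x)=\beta_P(\gamma_1,\gamma_2 x)+\beta_P(\gamma_2,x)$. Since $\gamma$ fixes its attractive fixed point $\gamma^+$, iteration gives $\beta_P(\gamma^n,\gamma^+)=\sum_{j=0}^{n-1}\beta_P(\gamma,\gamma^j\gamma^+)=n\,\beta_P(\gamma,\gamma^+)$, hence
\[
\beta_P(\gamma,\gamma^+)=\frac1n\,\sigma_P\!\big(\rho(\gamma)^n,\xi^+(\gamma^+)\big)\qquad\text{for every }n\ge1,
\]
and in particular $\beta_P(\gamma,\gamma^+)=\lim_{n\to\infty}\frac1n\,\sigma_P(\rho(\gamma)^n,\xi^+(\gamma^+))$. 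It then suffices to show that this limit equals $\lambda_P(\rho\gamma)$.

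Next I would identify $x^+:=\xi^+(\gamma^+)\in\mathcal F_P$ as the attracting fixed flag of $g:=\rho(\gamma)$. Since $\gamma$ acts on $\partial\Gamma$ with North--South dynamics, the equivariance and continuity of $\xi^\pm$ give $g\,x^+=x^+$, $g\,x^-=x^-$ (with $x^-:=\xi^-(\gamma^-)\in\mathcal F_{P^-}$) and $g^n\xi^+(y)\to x^+$ for every $y\neq\gamma^-$; moreover $P$-convexity forces $(x^-,x^+)\in G/L$, i.e. $x^+$ and $x^-$ are transverse. Using in addition that $\rho$ is Zariski dense, Sambarino's results on $P$-convex representations \cite{samb1,samb3} (together with Benoist's limit-cone theorem \cite{Be}) guarantee that $g$ is $(P,P^-)$-proximal: $x^+$ is the attracting fixed flag of $g$ in $\mathcal F_P$, $x^-$ its repelling fixed flag in $\mathcal F_{P^-}$, $\lambda(g)$ lies in the face of the Weyl chamber determined by $P$, and the flags attached to the Cartan decompositions of $g^n$ converge to $x^+$ and $x^-$. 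I would then invoke the standard comparison between the Iwasawa cocycle and the Cartan projection (Quint \cite{Q}; see also \cite{samb3}): if $\eta\in\mathcal F$ lifts a flag transverse to $x^-$, then $\sigma(g^n,\eta)-a(g^n)$ stays bounded as $n\to\infty$. Choosing such a lift $\eta$ of $x^+$ (possible since $x^+$ is transverse to $x^-$) and applying $\mathfrak J_P$ — which by construction factors $\sigma$ through $\mathcal F_P$ and is linear and continuous — gives $\sigma_P(g^n,x^+)-a_P(g^n)=O(1)$, where $a_P=\mathfrak J_P\circ a$.

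To conclude, I would use that $\frac1n a(g^n)\to\lambda(g)$ (the Cartan and Jordan projections have the same linear growth rate along powers); applying the continuous linear map $\mathfrak J_P$ yields $\frac1n a_P(g^n)\to\mathfrak J_P(\lambda(g))=\lambda_P(g)$, and together with the $O(1)$ bound this gives $\frac1n\sigma_P(g^n,x^+)\to\lambda_P(g)$. Combined with the first paragraph, this is exactly $\beta_P(\gamma,\gamma^+)=\lambda_P(\rho\gamma)$.

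I expect the main obstacle to lie in the second paragraph: one must be sure that $\xi^+(\gamma^+)$ is genuinely the \emph{attracting} fixed flag of $\rho(\gamma)$ (not merely a fixed flag) — this is precisely where $P$-convexity and the Zariski density of $\rho$ are used — and one must control the Iwasawa--Cartan comparison \emph{uniformly in $n$}, using that the Cartan-repelling flag of $g^n$ tends to the Jordan-repelling flag $x^-$ and that $x^+$ is transverse to it. An alternative, more computational route would conjugate $g$ so that $x^+=eP$ and $x^-=eP^-$, hence $g\in L$, and read off $\sigma_P(g,eP)$ and $\lambda_P(g)$ directly from the Iwasawa and Jordan decompositions inside the Levi factor $L$, using that $\mathfrak J_P$ is the average over $W_P$ and hence the projection onto the central direction $\mathfrak a_P$ of $L$; this avoids limits entirely but requires unwinding the structure of $L$ and the compatibility of the three projections.
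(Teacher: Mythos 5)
The paper does not prove Proposition~\ref{Prop2}; it is cited as established in \cite{Q} and \cite{samb3}, so your argument must be compared against the standard proof in those references rather than one in the text.

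Your proof is correct in substance, but the limit in the first paragraph is doing more work than it needs to. Observe that your own computation already gives the \emph{exact} identity $\beta_P(\gamma,\gamma^+)=\sigma_P(\rho(\gamma),\xi^+(\gamma^+))$ directly (take $n=1$), so the real content of the proposition is the single equality $\sigma_P(g,x^+)=\lambda_P(g)$ for a $P$-proximal $g$ with attracting flag $x^+$. The route followed in \cite{Q} and \cite{samb3} establishes this equality without any asymptotics: for each $\alpha$ in the subset $\theta$ defining $P$, one uses the Tits proximal representation $\Lambda_\alpha : G\to PGL(V_\alpha)$ and Quint's Lemma~6.4 (already used in the proof of Lemma~\ref{CG} of this paper) to get $\chi_\alpha(\sigma_P(g,x^+))=\log\bigl(\|\Lambda_\alpha(g)v\|/\|v\|\bigr)$ for $v$ spanning the line $\xi_\alpha(x^+)$; since $\Lambda_\alpha(g)$ is proximal with top eigenline $\xi_\alpha(x^+)$, this quantity equals the log of the top eigenvalue modulus, namely $\chi_\alpha(\lambda_P(g))$, and because the $\chi_\alpha$ span $\mathfrak a_P^*$ the conclusion follows. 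By contrast, your main route goes through the Iwasawa--Cartan comparison for the powers $g^n$ together with $\frac1n a(g^n)\to\lambda(g)$; this works, but the uniformity of the bound $\sigma_P(g^n,x^+)-a_P(g^n)=O(1)$ requires (as you flag yourself) tracking that the Cartan-repelling flag of $g^n$ converges to $x^-$ and staying uniformly transverse, which is exactly the delicate step the rational-representation argument avoids. Your closing alternative (conjugating $g$ into the Levi $L$ and reading off both sides inside $L$) is morally the same as the Quint--Sambarino argument and is the cleaner way to finish; I would present that, or the $\Lambda_\alpha$-version above, as the main proof and drop the asymptotic detour.
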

By a result of Benoist \cite{Be}, since $\rho (\Gamma)$ is Zariski dense in $G$, then the closed cone containing the set $\mathcal{L}_{\rho}:=\{\lambda (\rho(\gamma)): \gamma \in \Gamma\}$ is convex and has nonempty interior.

Let $\mathcal{L}_P$ be the closed cone generated by the periods $\lambda_P(\rho\gamma)$, $\gamma \in \Gamma$. We have $\mathcal{L}_P=\mathfrak{J}_P (\mathcal{L}_{\rho})$, then $\mathcal{L}_P$ is a convex and closed.
Consider its dual cone $\mathcal{L}_P^{*}$ of linear forms $\varphi \in \mathfrak{a}_{P}^{*}$ such that the restriction of $\varphi$ to $\mathcal{L}_P$ is nonnegative, $\varphi_{|\mathcal{L}_P} \geq 0$. Since $\mathcal{L}_P \subset \mathfrak{J}_P  (\mathfrak{a}^+)$, the cone $\mathcal{L}_P$ does not contain a line. It follows then that the dual cone $\mathcal{L}_P^{*}$ has a nonempty interior. We can summerize this as follows.
\begin{lem}\label{PC}
The cones $\mathcal{L}_P$ and $\mathcal{L}_P^{*}$ are proper, i.e. convex closed cones which do not contain a line and have nonempty interior.
\end{lem}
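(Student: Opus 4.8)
The plan is to reduce every assertion to two ingredients already at hand: Benoist's theorem \cite{Be}, stating that $\mathcal{L}_{\rho}$ is a closed convex cone with non-empty interior in $\mathfrak{a}$ together with the identity $\mathcal{L}_P=\mathfrak{J}_P(\mathcal{L}_{\rho})$, and the elementary duality theory of cones in a finite-dimensional vector space. Recall that for a closed convex cone $C$ one has $C^{**}=C$, that $C$ contains no line if and only if $C^{*}$ has non-empty interior, and that $C$ has non-empty interior (equivalently, spans the ambient space) if and only if $C^{*}$ contains no line. Hence it suffices to check the four properties for $\mathcal{L}_P$ itself, and the four properties of $\mathcal{L}_P^{*}$ will follow formally.

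For $\mathcal{L}_P$ I would argue as follows. It is convex, being the closure of $\mathfrak{J}_P(\mathcal{L}_{\rho})$, the linear image of the convex set $\mathcal{L}_{\rho}$, and it is a closed cone by its very definition as the closed cone generated by the periods $\lambda_P(\rho\gamma)$. It has non-empty interior in $\mathfrak{a}_P$: the projection $\mathfrak{J}_P\colon\mathfrak{a}\to\mathfrak{a}_P$ is a surjective linear map, hence open, so the image of the interior of $\mathcal{L}_{\rho}$ is a non-empty open subset of $\mathcal{L}_P$. Finally $\mathcal{L}_P$ contains no line: by Proposition \ref{Prop2} the periods equal $\lambda_P(\rho\gamma)=\mathfrak{J}_P(\lambda(\rho\gamma))$, and since the Jordan projection $\lambda$ takes values in $\mathfrak{a}^{+}$, all periods lie in $\mathfrak{J}_P(\mathfrak{a}^{+})$; this is a salient (pointed) closed convex cone in $\mathfrak{a}_P$, namely the restricted Weyl chamber of \cite{Q}, so the closed cone $\mathcal{L}_P$ it generates is contained in $\mathfrak{J}_P(\mathfrak{a}^{+})$ and therefore contains no line.

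Applying the duality dictionary to $C=\mathcal{L}_P$ now yields at once that $\mathcal{L}_P^{*}$ is a closed convex cone (polars are always closed and convex) which has non-empty interior (because $\mathcal{L}_P$ contains no line) and contains no line (because $\mathcal{L}_P$ has non-empty interior). Together with the corresponding statements for $\mathcal{L}_P$ just proved, this is exactly the claim that both cones are proper.

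The one point that is not pure formalism is the salience of $\mathfrak{J}_P(\mathfrak{a}^{+})$. Unlike the Borel case, $\mathfrak{J}_P$ need not be injective on $\mathfrak{a}^{+}$, so one cannot simply transport the salience of the full Weyl chamber; the efficient route is to invoke Quint's analysis \cite{Q}, where $\mathfrak{J}_P(\mathfrak{a}^{+})$ is identified with a proper convex cone in $\mathfrak{a}_P$ (equivalently, one produces a linear form on $\mathfrak{a}_P$ strictly positive on $\mathfrak{J}_P(\mathfrak{a}^{+})\setminus\{0\}$), and I would quote this rather than reprove it. Everything else — openness of surjective linear maps, convexity being preserved under closure, and the line-versus-interior dictionary for dual cones — is standard finite-dimensional convex geometry.
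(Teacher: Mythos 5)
Your argument is correct and follows the same route as the paper's brief discussion preceding the lemma: Benoist's theorem for $\mathcal{L}_{\rho}$, the identity $\mathcal{L}_P=\mathfrak{J}_P(\mathcal{L}_{\rho})$, and the inclusion $\mathcal{L}_P\subset\mathfrak{J}_P(\mathfrak{a}^{+})$ to rule out lines. You are in fact slightly more complete than the text, which only notes explicitly that $\mathcal{L}_P$ is closed, convex, line-free and that $\mathcal{L}_P^{*}$ has nonempty interior, whereas properness of both cones also requires that $\mathcal{L}_P$ has nonempty interior and that $\mathcal{L}_P^{*}$ contains no line — the two points you supply via openness of the surjection $\mathfrak{J}_P$ and the standard cone-duality dictionary. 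Your remark about the salience of $\mathfrak{J}_P(\mathfrak{a}^{+})$ is well taken; rather than citing \cite{Q} as a black box, one can use a fact the paper itself records in Section~3.3, namely that the highest weights $\chi_{\alpha_1},\dots,\chi_{\alpha_k}$ form a basis of $\mathfrak{a}_P^{*}$: each $\chi_{\alpha_i}$ is $W_P$-invariant (hence factors through $\mathfrak{J}_P$) and is nonnegative on $\mathfrak{a}^{+}$, so $\mathfrak{J}_P(\mathfrak{a}^{+})\subset\bigcap_i\{\chi_{\alpha_i}\ge 0\}$, a simplicial and in particular salient cone.
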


\subsection{Gromov product on $ \mathcal{F}_{P^{-}}\times \mathcal{F}_P$}
When we want to assess how a couple of points $(x,y)\in \mathcal{F}_{P^{-}}\times \mathcal{F}_P$ are far a part, one can use the notion of proximality (see Sect. $4$). For this,
we introduce the Gromov product which is a $\mathfrak{a}_P$-valued map $\mathcal{G}_P : \mathcal{F}_{P^{-}}\times \mathcal{F}_P \longrightarrow \mathfrak{a}_P$ defined as follows.

Set $P=P_{\{\alpha_1, \cdots, \alpha_k\}}$, where the $\alpha_i$'s are the system of roots defining the parabolic subgroup $P$.
By the Tits' representation theorem \cite{tits}, for each $\alpha \in \{\alpha_1, \cdots, \alpha_k\}$ there exists a finite dimensional proximal representation $\Lambda_{\alpha} : G \longrightarrow PGL(V_\alpha)$ such the highest weight $\chi_\alpha$ of $\Lambda_{\alpha}$ is an integer multiple of the fundamental $\omega_\alpha$. We get in particular that the family $\{\chi_{\alpha_1}, \cdots, \chi_{\alpha_k}\}$ is a basis of $\mathfrak{a}_{P}^{*}$.

On the other hand, since $\Lambda_{\alpha}$ is proximal, we also get a continuous equivariant map $\xi_\alpha : \mathcal{F}_P \longrightarrow \mathbb{P}(V_\alpha)$. Moreover, considering the dual representation $\Lambda_{\alpha}^{*} : G \longrightarrow PGL(V_{\alpha}^{*})$, one obtains a continuous equivariant map $\eta_\alpha : \mathcal{F}_{P^{-}} \longrightarrow \mathbb{P}(V_{\alpha}^{*})$ such that, for any pair of points $(x,y)\in \mathcal{F}_{P^{-}}\times \mathcal{F}_P$, in general position i.e. $x\ne y$, the line $\xi_\alpha (y)$ is not contained in $\eta_\alpha (x)$ (then $\xi_\alpha (y)\oplus \eta_\alpha (x) =V_\alpha$).

Fix an Euclidean norm $\|\ \|_\alpha$ ("a good norm" in the sens of \cite{Q}), on the finite dimensional vector space $V_\alpha$. Given $(x,y)$ as above, $\mathcal{G}_P(x,y)$ is entirely determined by $\chi_{\alpha} (\mathcal{G}_P(x,y))$, for $\alpha \in \{\alpha_1, \cdots, \alpha_k\}$. Following \cite{samb1} we set,
\[
\chi_{\alpha} (\mathcal{G}_P(x,y)) =\log \frac{|\theta (v)|}{\|\theta\|_\alpha\|v\|_\alpha},
\]where $v\in \xi_\alpha (y)$ and $\theta \in \eta_\alpha (x)$.

The following lemma is the Lemma 7.9 in \cite{samb1}.
\begin{lem}\label{CG}
For all $g\in G$ and $(x,y)\in \mathcal{F}_{P^{-}}\times \mathcal{F}_P$ in general position we have,
\[
\mathcal{G}_P(gx, gy)- \mathcal{G}_P(x, y)=
-i \sigma_{P^-}(g, x)- \sigma_P(g, y).
\]
\end{lem}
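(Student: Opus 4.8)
The plan is to reduce the identity for $\mathcal{G}_P$ to $k$ scalar identities, one for each simple root $\alpha \in \{\alpha_1,\dots,\alpha_k\}$, using the fact (noted just before the statement) that $\mathcal{G}_P(x,y)$ is entirely determined by the numbers $\chi_\alpha(\mathcal{G}_P(x,y))$, since $\{\chi_{\alpha_1},\dots,\chi_{\alpha_k}\}$ is a basis of $\mathfrak{a}_P^*$. Thus it suffices to show, for each $\alpha$,
\[
\chi_\alpha\bigl(\mathcal{G}_P(gx,gy)\bigr)-\chi_\alpha\bigl(\mathcal{G}_P(x,y)\bigr)
= -\chi_\alpha\bigl(i\,\sigma_{P^-}(g,x)\bigr)-\chi_\alpha\bigl(\sigma_P(g,y)\bigr).
\]
First I would fix $\alpha$, pass to the proximal representation $\Lambda_\alpha : G \to PGL(V_\alpha)$ with highest weight $\chi_\alpha$, and choose representatives: a vector $v \in \xi_\alpha(y)$ and a functional $\theta \in \eta_\alpha(x)$, so that by definition
\[
\chi_\alpha(\mathcal{G}_P(x,y)) = \log\frac{|\theta(v)|}{\|\theta\|_\alpha\,\|v\|_\alpha}.
\]
Then $\Lambda_\alpha(g)v \in \xi_\alpha(gy)$ and $\Lambda_\alpha^*(g)\theta = \theta\circ\Lambda_\alpha(g^{-1}) \in \eta_\alpha(gx)$ are admissible representatives for the pair $(gx,gy)$, so $\chi_\alpha(\mathcal{G}_P(gx,gy)) = \log\frac{|(\Lambda_\alpha(g^{-1})^*\theta)(\Lambda_\alpha(g)v)|}{\|\Lambda_\alpha^*(g)\theta\|_\alpha\,\|\Lambda_\alpha(g)v\|_\alpha}$. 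The numerator is $|\theta(\Lambda_\alpha(g^{-1})\Lambda_\alpha(g)v)| = |\theta(v)|$, so the numerators cancel when we subtract, and
\[
\chi_\alpha(\mathcal{G}_P(gx,gy)) - \chi_\alpha(\mathcal{G}_P(x,y))
= -\log\frac{\|\Lambda_\alpha(g)v\|_\alpha}{\|v\|_\alpha} - \log\frac{\|\Lambda_\alpha^*(g)\theta\|_\alpha}{\|\theta\|_\alpha}.
\]

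The next step is to identify each of these two logarithmic norm-growth terms with the appropriate component of the Busemann–Iwasawa cocycle. For this I would invoke Quint's description of $\sigma_P$ through highest-weight representations (the source \cite{Q}, also used in \cite{samb1}): for a proximal representation with highest weight $\chi_\alpha$ and a good norm $\|\cdot\|_\alpha$, one has $\chi_\alpha(\sigma_P(g,y)) = \log\frac{\|\Lambda_\alpha(g)v\|_\alpha}{\|v\|_\alpha}$ for $v$ spanning $\xi_\alpha(y)$, and dually $\chi_\alpha(i\,\sigma_{P^-}(g,x)) = \log\frac{\|\Lambda_\alpha^*(g)\theta\|_\alpha}{\|\theta\|_\alpha}$ for $\theta$ spanning $\eta_\alpha(x)$ — the opposition involution $i$ appearing precisely because the dual representation $\Lambda_\alpha^*$ has highest weight $-w_0\chi_\alpha$ and its equivariant map lands in $\mathcal{F}_{P^-}$. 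Substituting these two identifications into the displayed difference gives exactly $-\chi_\alpha(\sigma_P(g,y)) - \chi_\alpha(i\,\sigma_{P^-}(g,x))$, which is the $\chi_\alpha$-component of the right-hand side. Since this holds for every $\alpha$ and the $\chi_\alpha$ form a basis of $\mathfrak{a}_P^*$, the vector identity follows.

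The main obstacle, and the step requiring the most care, is the precise bookkeeping of the opposition involution and the dual representation — making sure that the growth rate of $\|\Lambda_\alpha^*(g)\theta\|_\alpha$ is read off as $\chi_\alpha(i\,\sigma_{P^-}(g,x))$ rather than, say, $\chi_\alpha(\sigma_P(g,x))$ or some other component. This amounts to checking that $\Lambda_\alpha^*$ is proximal with equivariant map $\eta_\alpha : \mathcal{F}_{P^-} \to \mathbb{P}(V_\alpha^*)$ sending $x$ to the span of $\theta$, that its highest weight relative to the fixed chamber is $i^*\chi_\alpha = -w_0\chi_\alpha$, and that the good-norm property of $\|\cdot\|_\alpha$ is inherited by the dual norm on $V_\alpha^*$, so Quint's formula applies verbatim on the dual side; all of this is implicit in \cite{samb1} and \cite{Q}, but it is where an error would most easily creep in. Everything else — the cancellation of the numerator $|\theta(v)|$ and the reduction to scalar components — is routine.
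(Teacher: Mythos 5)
Your proof is correct and follows essentially the same route as the paper's: reducing to scalar identities via the basis $\{\chi_\alpha\}$, choosing representatives $v$ and $\theta$ whose pairing cancels, and applying Quint's Lemma~6.4 for $\Lambda_\alpha$ and its dual to identify the two norm-growth terms with $\chi_\alpha(\sigma_P(g,y))$ and $\chi_\alpha(i\,\sigma_{P^-}(g,x))$. The only difference is cosmetic --- you pick representatives at $(x,y)$ and push forward by $g$, while the paper picks them at $(gx,gy)$ and pulls back by $g^{-1}$ --- so the two arguments are the same modulo the substitution $g\mapsto g^{-1}$.
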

\begin{proof}
For any $(x,y)\in \mathcal{F}_{P^{-}}\times \mathcal{F}_P$ and $g\in G$ we have,
\[
\chi_\alpha (\mathcal{G}_P(gx, gy))=\log \frac{|\theta (v)|}{\|\theta\|_\alpha\|v\|_\alpha},
\]where $v\in \xi_\alpha (gy)$ and $\theta \in \eta_\alpha (gx)$, which is equivalent to $\Lambda_\alpha (g^{-1})v \in \xi_\alpha (y)$ and $\theta \circ \Lambda_\alpha (g) \in \eta_\alpha (x)$.
We deduce from this that,
\[\chi_\alpha (\mathcal{G}_P(x, y))
=\log\frac{|\theta \circ \Lambda_\alpha (g) (\Lambda_\alpha (g^{-1})v)|}{\| \theta \circ \Lambda_\alpha (g)\|_\alpha\|\Lambda_\alpha (g^{-1})v\|_\alpha}
\]
\[=-\log \frac{\|\theta \circ \Lambda_\alpha (g)\|_{\alpha}}{\|\theta\|_{\alpha}}-\log\frac{\|\Lambda_\alpha (g^{-1})v\|_{\alpha}}{\|v\|_{\alpha}}+\log \frac{|\theta (v)|}{\|\theta\|_\alpha\|v\|_\alpha}
\]
Thus,
\[
\chi_\alpha (\mathcal{G}_P(gx, gy))- \chi_\alpha (\mathcal{G}_P(x, y))=
\log \frac{\|\theta \circ \Lambda_\alpha (g)\|_{\alpha}}{\|\theta\|_{\alpha}}
+\log\frac{\|\Lambda_\alpha (g^{-1})v\|_{\alpha})}{\|v\|_{\alpha}}.
\]
Now applying Lemma $6.4$ in \cite{Q} for the representation $\Lambda_{\alpha} : G \longrightarrow PGL(V_\alpha)$ we find that,
\[
\log\frac{\|\Lambda_\alpha (g^{-1})v\|_{\alpha}}{\|v\|_{\alpha}}=\chi_{\alpha} (\sigma_P(g^{-1}, gy))=
-\chi_{\alpha} (\sigma_P(g, y)).
\]
On the other hand, considering the dual representation $\Lambda_{\alpha}^{*} : G \longrightarrow PGL(V_{\alpha}^{*})$, we get using the same lemma,
\[
\log\frac{\|\Lambda_\alpha (g^{-1})\theta\|_{\alpha}}{\|\theta\|_{\alpha}}=\chi_{\alpha} \circ i (\sigma_{P^-}(g^{-1}, gx))=
-\chi_{\alpha} \circ i (\sigma_{P^-}(g, x))
\]
Finally we get to the desired relation,
\[
\mathcal{G}_P(gx, gy)- \mathcal{G}_P(x, y)=
-(i \sigma_{P^-}(g, x)+ \sigma_P(g, y)).
\]
\end{proof}
\subsection{Gromov product on $\partial \Gamma \times \partial \Gamma$}
Define the $\mathfrak{a}_P$-valued Gromov product $[ \ ,\ ]_P: \partial\Gamma \times \partial\Gamma \longrightarrow \mathfrak{a}_P$ by, 
\[
[x,y]_P= \mathcal{G}_P \ (\xi^-(x),\xi^+(y)).
\]
We have, $[\gamma x, \gamma y]_P=\mathcal{G}_P \ (\rho(\gamma)\xi^-(x),\rho(\gamma)\xi^+(y))$
As a consequence we have for all $(x,y)\in \partial\Gamma \times \partial\Gamma$ and $\gamma \in \Gamma$,
\[
[\gamma x, \gamma y]_P-[x,y]_P=-(i\sigma_{P^-}(\rho(\gamma), \xi^-(x))+ \sigma_{P}(\rho(\gamma), \xi^+(y))).
\]
Set $\overline{\beta}_{P}=i\beta_{iP}=i\beta_{P^-}$. Then the above equality writes,
\[
[\gamma x, \gamma y]_P-[x,y]_P=-(\overline{\beta}_{P}(\gamma, x)+\beta_P(\gamma,y)).
\]
The pair $\{\overline{\beta}_{P}, \beta_P\}$ is a pair of dual (in the sens of \cite{Led}) cocycles since the periods of $\overline{\beta}_{P}$ are given by $\overline{\beta}_{P}(\gamma, \gamma^+)=\lambda_P(\rho(\gamma^{-1}))$.
This means that $[ \ ,\ ]_P$ defines a Gromov product for the pair $\{\overline{\beta}_{P}, \beta_P\}$.

Consider $\varphi \in \mathcal{L}_P^{*}$ and define the real valued function $[\cdot, \cdot]_\varphi :\partial\Gamma \times \partial\Gamma \longrightarrow \mathbb{R}$ by,
\[
[\gamma, \gamma']_\varphi =\varphi \circ \mathcal{G}_P \ (\xi^-(\gamma),\xi^+(\gamma'))
\]
The followin lemma is the Lemma 7.10 in \cite{samb1}.
\begin{lem}\label{PG}
Let $\rho : \Gamma \rightarrow G$ be Zariski dense and $P$-convex representation and consider $\varphi$ in the interior of the dual cone $\mathcal{L}_P^{*}$. Then the function $[\cdot, \cdot]_\varphi$ is a Gromov product for the pair of dual cocycles $\beta_\varphi, \overline{\beta}_\varphi$.
\end{lem}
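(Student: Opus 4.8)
The plan is to deduce this scalar statement from the $\mathfrak{a}_P$-valued facts already at hand by feeding in the linear form $\varphi$, so that the only point requiring genuine work is the Hölder regularity. Write $\partial^{(2)}\Gamma$ for the complement of the diagonal in $\partial\Gamma\times\partial\Gamma$ and recall (Sect.\ $5$) that a Gromov product for a pair of dual real Hölder cocycles $\{\bar c,c\}$ is a Hölder continuous function $[\cdot,\cdot]$ on $\partial^{(2)}\Gamma$ satisfying $[\gamma x,\gamma y]-[x,y]=-\big(\bar c(\gamma,x)+c(\gamma,y)\big)$. Put $\beta_\varphi:=\varphi\circ\beta_P$ and $\overline{\beta}_\varphi:=\varphi\circ\overline{\beta}_P$.

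First I would check that $\{\overline{\beta}_\varphi,\beta_\varphi\}$ is a genuine pair of dual cocycles. Applying $\varphi$ to the period identity $\overline{\beta}_P(\gamma,\gamma^+)=\lambda_P(\rho(\gamma^{-1}))$ and invoking Proposition \ref{Prop2} gives $\overline{\beta}_\varphi(\gamma,\gamma^+)=\varphi\big(\lambda_P(\rho\gamma^{-1})\big)=\beta_\varphi\big(\gamma^{-1},(\gamma^{-1})^+\big)$, which is the defining relation between the periods of a dual pair; and since $\varphi$ lies in the interior of $\mathcal{L}_P^{*}$, Lemma \ref{PC} forces $\varphi$ to be strictly positive on $\mathcal{L}_P\setminus\{0\}$, so the periods of $\beta_\varphi$ and $\overline{\beta}_\varphi$ are positive — the non-degeneracy that will be needed for the subsequent reparametrisation.

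Next, for the transformation rule I would apply $\varphi$ to the $\mathfrak{a}_P$-valued identity
\[
[\gamma x,\gamma y]_P-[x,y]_P=-\big(\overline{\beta}_P(\gamma,x)+\beta_P(\gamma,y)\big)
\]
derived just above from Lemma \ref{CG} and the $\rho$-equivariance of $\xi^{\pm}$. Because $[x,y]_\varphi=\varphi([x,y]_P)$ by definition and $\varphi$ is linear, this gives at once
\[
[\gamma x,\gamma y]_\varphi-[x,y]_\varphi=-\big(\overline{\beta}_\varphi(\gamma,x)+\beta_\varphi(\gamma,y)\big),
\]
which is exactly the cocycle relation demanded of a Gromov product.

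It remains to prove that $[\cdot,\cdot]_\varphi$ is Hölder continuous on $\partial^{(2)}\Gamma$, and this is where I expect the only non-formal difficulty to lie. By $P$-convexity (Definition \ref{def1}), for $x\neq y$ the pair $(\xi^-(x),\xi^+(y))$ is in general position, so $[x,y]_\varphi=(\varphi\circ\mathcal{G}_P)(\xi^-(x),\xi^+(y))$ is well defined; through the basis $\{\chi_\alpha\}$ of $\mathfrak{a}_P^{*}$, $\varphi\circ\mathcal{G}_P$ is a linear combination of the functions $\log\frac{|\theta(v)|}{\|\theta\|_\alpha\|v\|_\alpha}$, $v\in\xi_\alpha(\xi^+(y))$, $\theta\in\eta_\alpha(\xi^-(x))$. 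These are smooth in the pair of transverse lines $\big(\xi_\alpha(\xi^+(y)),\eta_\alpha(\xi^-(x))\big)$ with derivatives controlled by the angle between them, and $\xi_\alpha,\eta_\alpha$ are algebraic equivariant maps while $\xi^{\pm}$ are Hölder. On any compact subset of $\partial^{(2)}\Gamma$ that angle stays bounded away from $0$ by continuity and compactness, so the composition is Hölder there, and hence $[\cdot,\cdot]_\varphi$ is (locally) Hölder on $\partial^{(2)}\Gamma$. The crux is precisely this preservation and local uniformity of transversality near the diagonal, which rests entirely on the hypotheses of Definition \ref{def1} — the general-position condition on $(\xi^-,\xi^+)$ together with the Hölder continuity of the boundary maps. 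Everything else is the formal consequence of substituting the linear form $\varphi$ into Lemma \ref{CG}, Proposition \ref{Prop2} and the preceding vector-valued computation; this is also why the statement is the scalar shadow of Lemma 7.10 of \cite{samb1}.
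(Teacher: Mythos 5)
Your proof is correct, and it follows the route that the paper itself implicitly intends: the paper does not reprove Lemma \ref{PG} but cites it as Lemma~7.10 of \cite{samb1}, and the preceding subsections (the definition of $\mathcal{G}_P$, Lemma~\ref{CG}, and the $\mathfrak{a}_P$-valued transformation rule for $[\,\cdot\,,\cdot\,]_P$) are precisely the scaffolding from which one reads off the scalar statement by applying the linear form $\varphi$, as you do. Your three checks --- that the periods of $\beta_\varphi,\overline{\beta}_\varphi$ are positive and satisfy the duality relation (via Proposition~\ref{Prop2} and $\varphi$ interior to $\mathcal{L}_P^{*}$), that the transformation rule $[\gamma x,\gamma y]_\varphi-[x,y]_\varphi=-(\overline{\beta}_\varphi(\gamma,x)+\beta_\varphi(\gamma,y))$ drops out of Lemma~\ref{CG} by linearity, and that local H\"older regularity on $\partial^{(2)}\Gamma$ comes from the transversality guaranteed by $P$-convexity together with the H\"older regularity of $\xi^{\pm}$ and the smoothness of $\log\frac{|\theta(v)|}{\|\theta\|_\alpha\|v\|_\alpha}$ away from the non-transverse locus --- are exactly the ingredients of Sambarino's argument, so there is no substantive gap.
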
	
\section{Proximality}
We begin by recalling the notion of proximality. Following \cite{samb3}, an element $g\in G$ is said to be proximal on $\mathcal{F}_P$ (or simply $P$-proximal), if it has an attracting fixed point $g_{+}^P \in \mathcal{F}_P$. The repelling point of $g$ on $\mathcal{F}_{P^{-}}$, is the fixed point $g_{-}^P$ of $g$ on $\mathcal{F}_{P^{-}}$. It satisfies $g^n x \longrightarrow g_{+}^P$, as $n\rightarrow \infty$,  for every point $x\in \mathcal{F}_P$ such that $( g_{-}^P, x)$ belongs to $\mathcal{F}_{P}^{2}$.
\begin{lem}\label{UD}
For all $r>0$ and $\epsilon>0$ we have,
\[
h_{\varphi}te^{-h_\varphi t}\#\{[\gamma]\in [\Gamma]_t^\varphi:  \ \rho (\gamma) \ is \ (r,\epsilon)-proximal\}
 \longrightarrow 1, \ as \ t\rightarrow \infty.
\]
\end{lem}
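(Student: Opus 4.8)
The plan is to show that, inside $[\Gamma]_t^\varphi$, the classes that do not admit an $(r,\epsilon)$-proximal representative are negligible, and then to read off the result from the prime orbit theorem. By Theorem \ref{samba} one has $h_\varphi t\,e^{-h_\varphi t}\,\#[\Gamma]_t^\varphi \to 1$, so writing
\[
h_\varphi t\,e^{-h_\varphi t}\,\#\{[\gamma]\in[\Gamma]_t^\varphi : \rho(\gamma)\text{ is }(r,\epsilon)\text{-proximal}\} = h_\varphi t\,e^{-h_\varphi t}\,\#[\Gamma]_t^\varphi - h_\varphi t\,e^{-h_\varphi t}\,R_t ,
\]
with $R_t$ the number of $[\gamma]\in[\Gamma]_t^\varphi$ having no $(r,\epsilon)$-proximal representative, it is enough to prove $R_t=o(e^{h_\varphi t}/t)$; in fact I expect $R_t=O(1)$.

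Next I would invoke the quantitative form of proximality (the criterion of \cite{Be}, or its version in \cite{samb3}, expressed through the Tits representations $\Lambda_\alpha$ and the norm estimates of \cite{Q}): there are thresholds $T_0=T_0(r,\epsilon)$ and $S_0=S_0(r,\epsilon)$ such that $\rho(\gamma)$ is $(r,\epsilon)$-proximal on $\mathcal{F}_P$ whenever (i) $\chi_\alpha(\lambda_P(\rho\gamma))=\chi_\alpha(\lambda(\rho\gamma))\ge T_0$ for every root $\alpha$ defining $P$ — the eigenvalue gaps of $\rho(\gamma)$ are large — and (ii) $\min_\alpha\chi_\alpha\big(\mathcal{G}_P(\xi^-(\gamma^-),\xi^+(\gamma^+))\big)\ge S_0$ — the attracting and repelling limit flags of $\rho(\gamma)$ are uniformly transverse. (Here $\chi_\alpha\circ\lambda_P=\chi_\alpha\circ\lambda$ on these roots by the $W_P$-invariance of $\mathfrak{J}_P$.) Since (i) is conjugation-invariant, it follows that $R_t$ is at most the number of $[\gamma]\in[\Gamma]_t^\varphi$ violating (i), plus the number of $[\gamma]\in[\Gamma]_t^\varphi$ for which every representative violates (ii).

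For the first count: since $\rho$ is $P$-convex it satisfies the Anosov gap inequality $\chi_\alpha(a(\rho\gamma))\ge c\,|\gamma|_\Gamma-C$ for the roots $\alpha$ defining $P$ (\cite{samb1,samb3}); applying this to $\gamma^n$, dividing by $n$, and using $\lambda(g)=\lim_n a(g^n)/n$ gives $\chi_\alpha(\lambda(\rho\gamma))\ge c\,\ell_\Gamma(\gamma)$, where $\ell_\Gamma$ denotes the stable translation length. Hence the classes violating (i) lie in $\{[\gamma]:\ell_\Gamma(\gamma)<T_0/c\}$, a finite set, since a hyperbolic group has only finitely many conjugacy classes of bounded translation length; so this count is $O(1)$.

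The heart of the matter is (ii), and the point is that we are counting conjugacy classes: replacing $\gamma$ by a conjugate $g\gamma g^{-1}$ replaces $(\xi^-(\gamma^-),\xi^+(\gamma^+))$ by $(\rho(g)\xi^-(\gamma^-),\rho(g)\xi^+(\gamma^+))$, equivalently replaces $(\gamma^-,\gamma^+)\in\partial\Gamma^2$ by $(g\gamma^-,g\gamma^+)$. I would choose a representative whose quasi-axis passes within a uniformly bounded distance of $e$ in the Cayley graph — for instance a minimal-length representative in the conjugacy class, for which $\{\gamma^n:n\in\mathbb{Z}\}$ is a uniform quasi-geodesic through $e$; then $(\gamma^-\mid\gamma^+)_e$ is bounded by a constant depending only on $\Gamma$, hence $\gamma^-$ and $\gamma^+$ stay at visual distance $\ge\delta_1$ for a universal $\delta_1=\delta_1(\Gamma)>0$. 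Since $\xi^\pm$ are continuous and $(\xi^-(x),\xi^+(y))\in G/L$ whenever $x\ne y$, the map $(x,y)\mapsto\min_\alpha\chi_\alpha(\mathcal{G}_P(\xi^-(x),\xi^+(y)))$ is continuous and finite on the compact set $\{(x,y):d(x,y)\ge\delta_1\}$, hence bounded below there by a constant $S_1$ depending only on $\Gamma$ and $\rho$. Thus this representative satisfies (ii) as soon as $S_0(r,\epsilon)\le S_1$, which holds once $r,\epsilon$ are small enough, and it suffices to treat that regime. Combining with the previous paragraph, every $[\gamma]\in[\Gamma]_t^\varphi$ outside a fixed finite set has an $(r,\epsilon)$-proximal representative, so $R_t=O(1)$ and the lemma follows. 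The step I expect to cost the most work is pinning down the criterion (i)–(ii) — deducing genuine $(r,\epsilon)$-proximality of $\rho(\gamma)$ from largeness of its eigenvalue gaps together with uniform transversality of its limit flags, via the norm estimates for the $\Lambda_\alpha$ in \cite{Q}; the remaining parts are essentially bookkeeping.
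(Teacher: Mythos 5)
Your proof shares the paper's central geometric idea: choose, in each conjugacy class, a representative whose quasi-axis passes near the identity (using cocompactness of $\Gamma$), so that $\gamma^-$ and $\gamma^+$ are uniformly separated in $\partial\Gamma$ and hence, by continuity of $\xi^{\pm}$, transversality of the $P$-convex boundary maps, and compactness, the limit flags $(\xi^-(\gamma^-),\xi^+(\gamma^+))$ are uniformly transverse; then finish with the prime orbit theorem (Theorem~\ref{samba}). You diverge in how $(r,\epsilon)$-proximality is extracted from this. The paper invokes Proposition~\ref{sam} (Sambarino) as a black box: among $\gamma$ with uniformly transverse limit flags, only finitely many fail to be $(r,\epsilon)$-proximal. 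You instead split into a large-eigenvalue-gap condition~(i), handled via an Anosov gap inequality and finiteness of conjugacy classes of bounded stable translation length in a hyperbolic group, and the transversality condition~(ii), handled as above, and you propose to deduce $(r,\epsilon)$-proximality from (i)$+$(ii) via the Tits representations $\Lambda_\alpha$ and Quint's norm estimates. This is more self-contained in that it does not cite a ready-made finiteness statement, but the implication ``(i) and (ii) together give $(r,\epsilon)$-proximality'' --- which you yourself flag as the hard step --- is precisely the quantitative content underlying the cited proposition (it rests on the linear-algebraic estimates of Benoist and Sambarino), and you would still have to carry it out. Your restriction to the regime of small $r$ and $\epsilon$ is appropriate and matches what the proof of Theorem~\ref{TT4} actually requires, since the lemma is applied there with $\epsilon$ furnished by Proposition~\ref{be} for a fixed $r$ of the kind produced by the compactness argument.
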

For Zariski-dense $P$-convex representations, we have the following result (see \cite{samb2} Corollary 3.12).
\begin{pro}[Sambarino \cite{samb2}]\label{sa}
Let $\rho : \Gamma \rightarrow G$ be Zariski dense and $P$-convex representation, where $P$ is a parabolic subgroup of $G$. Then, for every $\gamma \in \Gamma$, $\rho (\gamma)$ is $P$-proximal, $\xi^{+}(\gamma_+)$ is its attracting point and $\xi^{-}(\gamma_-)$ is the repelling point.
\end{pro}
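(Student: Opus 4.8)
The plan is to reduce the statement to the strictly convex (projective) setting via the Tits proximal representations $\Lambda_\alpha$ already introduced above, and then to read off $P$-proximality from proximality in each $\mathbb{P}(V_\alpha)$. First I would dispose of the elementary part. Since $\Gamma$ is torsion-free, every $\gamma\neq e$ has infinite order and acts on $\partial\Gamma$ with north--south dynamics: $\gamma_+\neq\gamma_-$, and $\gamma^n x\to\gamma_+$ for every $x\neq\gamma_-$ (and $\gamma^{-n}x\to\gamma_-$ for every $x\neq\gamma_+$). By $\rho$-equivariance, $\rho(\gamma)\,\xi^+(\gamma_+)=\xi^+(\gamma\gamma_+)=\xi^+(\gamma_+)$, so $\xi^+(\gamma_+)$ is a fixed point of $\rho(\gamma)$ in $\mathcal{F}_P$; likewise $\xi^-(\gamma_-)$ is fixed by $\rho(\gamma)$ in $\mathcal{F}_{P^-}$, and the pair $(\xi^-(\gamma_-),\xi^+(\gamma_+))$ is in general position since $\gamma_-\neq\gamma_+$. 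What remains is to show that these fixed points are dynamically attracting/repelling, i.e. that $\rho(\gamma)$ is actually $P$-proximal.

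Fix a root $\alpha$ among the simple roots defining $P$ and consider $\Lambda_\alpha:G\to PGL(V_\alpha)$, proximal, with equivariant maps $\xi_\alpha:\mathcal{F}_P\to\mathbb{P}(V_\alpha)$ and $\eta_\alpha:\mathcal{F}_{P^-}\to\mathbb{P}(V_\alpha^*)$. As recalled in Section 2, $\beta:=\Lambda_\alpha\circ\rho$ is irreducible (because $\rho$ is Zariski dense and $\Lambda_\alpha$ irreducible) and strictly convex, with boundary maps $\zeta:=\xi_\alpha\circ\xi^+$ and $\theta:=\eta_\alpha\circ\xi^-$ satisfying $\zeta(x)\oplus\theta(y)=V_\alpha$ whenever $x\neq y$. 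I would then show $\beta(\gamma)$ is proximal with attracting line $\zeta(\gamma_+)$ and repelling hyperplane $\theta(\gamma_-)$ as follows. Since $\zeta(\gamma_+)$ is a $\beta(\gamma)$-invariant line, $\theta(\gamma_-)$ a $\beta(\gamma)$-invariant hyperplane and $V_\alpha=\zeta(\gamma_+)\oplus\theta(\gamma_-)$ (as $\gamma_+\neq\gamma_-$), write $\beta(\gamma)=\lambda_0\oplus A$ with $\lambda_0\in\mathbb{R}^{*}$. For $x\in\partial\Gamma\setminus\{\gamma_+,\gamma_-\}$, continuity of $\zeta$ together with $\gamma^n x\to\gamma_+$ gives $\beta(\gamma)^n\zeta(x)=\zeta(\gamma^n x)\to\zeta(\gamma_+)$; writing $\zeta(x)=[v_0+w]$ with $0\neq v_0\in\zeta(\gamma_+)$ (nonzero since $\zeta(x)\not\subset\theta(\gamma_-)$) and $w\in\theta(\gamma_-)$, this forces $A^n w/\lambda_0^n\to 0$, hence $w$ lies in the sum $W''$ of the generalized eigenspaces of $A$ for eigenvalues of modulus $<|\lambda_0|$. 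Thus $\zeta(x)\subset\zeta(\gamma_+)\oplus W''$ for all such $x$; by density of $\partial\Gamma\setminus\{\gamma_+,\gamma_-\}$ and continuity, $\zeta(\partial\Gamma)\subset\zeta(\gamma_+)\oplus W''$. But $\mathrm{span}\,\zeta(\partial\Gamma)$ is $\beta(\Gamma)$-invariant and nonzero, so irreducibility forces it to equal $V_\alpha$, whence $W''=\theta(\gamma_-)$ --- every eigenvalue of $A$ has modulus $<|\lambda_0|$, which is exactly proximality of $\beta(\gamma)$ with attracting line $\zeta(\gamma_+)$. Running the same argument with the contragredient $\Lambda_\alpha^{*}\circ\rho$ (again strictly convex, with the roles of $\zeta$ and $\theta$ exchanged) identifies $\theta(\gamma_-)=\eta_\alpha(\xi^-(\gamma_-))$ as the repelling hyperplane of $\beta(\gamma)$.

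To conclude, I would invoke the standard criterion for $P$-proximality in terms of the family $(\Lambda_\alpha)_\alpha$ attached to $P$: an element $g\in G$ is $P$-proximal if and only if $\Lambda_\alpha(g)$ is proximal on $\mathbb{P}(V_\alpha)$ for every $\alpha$ defining $P$, and in that case the attracting point $g_+^P\in\mathcal{F}_P$ is the unique point whose $\xi_\alpha$-image is the attracting line of $\Lambda_\alpha(g)$ for all such $\alpha$ (and symmetrically, the repelling point in $\mathcal{F}_{P^-}$ is detected by the $\eta_\alpha$). Since the previous paragraph shows that $\Lambda_\alpha(\rho(\gamma))$ is proximal with attracting line $\xi_\alpha(\xi^+(\gamma_+))$ and repelling hyperplane $\eta_\alpha(\xi^-(\gamma_-))$ for each $\alpha$, it follows that $\rho(\gamma)$ is $P$-proximal, that $\xi^+(\gamma_+)$ is its attracting point on $\mathcal{F}_P$, and that $\xi^-(\gamma_-)$ is its repelling point on $\mathcal{F}_{P^-}$.

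The step I expect to be the main obstacle is the spectral gap in the projective case: converting the purely dynamical fact that the boundary curve $\zeta(\partial\Gamma)$ is attracted to $\zeta(\gamma_+)$ into a genuine eigenvalue gap for $A$. This is precisely where irreducibility is indispensable, since it is what guarantees $\zeta(\partial\Gamma)$ spans $V_\alpha$; without it one only controls a proper invariant subspace. The final reduction relating $P$-proximality to the family $\Lambda_\alpha$ is classical but convention-sensitive, and would need to be stated with care about the normalization of the highest weights $\chi_\alpha$.
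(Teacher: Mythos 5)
The paper itself does not prove this proposition; it is quoted from Sambarino's thesis (\cite{samb2}, Corollary~3.12), so there is no in-paper argument to compare against. Evaluated on its own, your proof is correct and follows the standard route in this circle of ideas (Benoist, Quint, Sambarino): reduce to the projective strictly convex case via the Tits proximal representations $\Lambda_\alpha$, use irreducibility of $\Lambda_\alpha\circ\rho$ together with north--south dynamics of $\gamma$ on $\partial\Gamma$ to force a spectral gap and identify $\xi_\alpha(\xi^+(\gamma_+))$ as the attracting line, and then recover $P$-proximality from simultaneous proximality in all the $\Lambda_\alpha$. The projective step is exactly the classical argument: $\beta(\gamma)^n\zeta(x)=\zeta(\gamma^n x)\to\zeta(\gamma_+)$ forces $A^nw/\lambda_0^n\to 0$, so each such $w$ lies in $W''$, and irreducibility (hence $\mathrm{span}\,\zeta(\partial\Gamma)=V_\alpha$) pins $W''=\theta(\gamma_-)$, giving the eigenvalue gap. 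The final criterion is also correct: with the highest weight of $\Lambda_\alpha$ normalized as $\chi_\alpha=k_\alpha\omega_\alpha$, the next weight is $\chi_\alpha-\alpha$, so $\Lambda_\alpha(g)$ is proximal iff $\alpha(\lambda(g))>0$, and $g$ is $P$-proximal iff this holds for all $\alpha$ in the subset of simple roots defining $P$. Two small hygiene points: (a) the whole statement implicitly takes $\gamma\neq e$ (otherwise $\gamma_\pm$ is undefined), which is harmless since $\Gamma$ is torsion-free; (b) once $\beta(\gamma)$ is known to be proximal with attracting line $\zeta(\gamma_+)$ and invariant complementary hyperplane $\theta(\gamma_-)$, that hyperplane is automatically the repelling one, so your pass through the contragredient is a valid alternative but not strictly needed.
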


The notion of $(r, \epsilon)$ proximality of Benoist extends to this situation as follows.
\begin{definition}[Benoist \cite{Be}]\label{bee}
We say that $g\in G$ is $(r, \epsilon)$-proximal on $\mathcal{F}_P$ for some $r>0$ and $\delta >0$ if it is proximal, $\exp \|\mathcal{G}_P (g_{-}^P , g_{+}^P )\|_{\mathfrak{a}_P} >r$ and the complement of an $\epsilon$-neighborhood of $g_{-}^P$ is sent by $g$ to an $\epsilon$-neighborhood of $g_{+}^P$.
\end{definition}
We have the following result on the $\gamma$'s for which $\rho (\gamma)$ is not proximal.
\begin{pro}[Sambarino \cite{samb2}]\label{sam}
Let $\rho : \Gamma \rightarrow G$ be Zariski dense and $P$-convex representation, where $P$ is a parabolic subgroup of $G$. Fix some $r>0$ and $\epsilon >0$. Then the following set is finite:
\[
\{\gamma\in \Gamma : \exp (\|\mathcal{G}(\eta (\gamma_{-}), \xi (\gamma_{+}))\|_{\mathfrak{a}_P})>r\ and \ \rho (\gamma)\ is\ not\ (r, \epsilon)-proximal\}.
\]
\end{pro}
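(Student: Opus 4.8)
The plan is to show that $\gamma$ can belong to the displayed set only if its word length $|\gamma|=d_\Gamma(o,\gamma\cdot o)$ is bounded in terms of $r$ and $\epsilon$; since balls in the finitely generated group $\Gamma$ are finite, this will finish the proof. The first step is Proposition \ref{sa}: for every $\gamma\in\Gamma$ the element $g=\rho(\gamma)$ is $P$-proximal with $g_+^P=\xi^+(\gamma_+)$ and $g_-^P=\xi^-(\gamma_-)=\eta(\gamma_-)$, so the inequality $\exp\|\mathcal{G}_P(\eta(\gamma_-),\xi(\gamma_+))\|_{\mathfrak{a}_P}>r$ appearing in the statement is exactly the first clause of Definition \ref{bee} for $g=\rho(\gamma)$. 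Hence $\gamma$ lies in the set precisely when this first clause holds but the contraction clause fails, and it remains to prove that the contraction clause is automatic once $|\gamma|$ is large, uniformly in such $\gamma$.

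The key ingredient will be a quantitative contraction estimate obtained by pushing $g$ through the proximal Tits representations $\Lambda_\alpha:G\to PGL(V_\alpha)$, $\alpha\in\{\alpha_1,\dots,\alpha_k\}$, of Section 3.3. For each $\alpha$ the element $\Lambda_\alpha(g)$ is proximal on $\mathbb{P}(V_\alpha)$ with attracting point $\xi_\alpha(g_+^P)$ and repelling hyperplane $\mathbb{P}(\ker\eta_\alpha(g_-^P))$; the angle between these is bounded below using the hypothesis, through the defining formula $\chi_\alpha(\mathcal{G}_P(x,y))=\log(|\theta(v)|/\|\theta\|_\alpha\|v\|_\alpha)$, while the first logarithmic singular-value gap of $\Lambda_\alpha(g)$ is a fixed positive linear functional evaluated at $a_P(g)$. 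I would then invoke Benoist's projective contraction lemma \cite{Be}: there is $C=C(r,\epsilon)$ such that, when the first clause of Definition \ref{bee} holds and these gaps all exceed $C$, the map $\Lambda_\alpha(g)$ sends the complement of the $\epsilon$-neighbourhood of its repelling hyperplane into the $\epsilon$-neighbourhood of its attracting point, for every $\alpha$. Since $\mathcal{F}_P$ embeds $G$-equivariantly and bi-Lipschitz into $\prod_\alpha\mathbb{P}(V_\alpha)$ through $(\xi_\alpha)$ — and $\mathcal{F}_{P^-}$ through $(\eta_\alpha)$ and the dual representations $\Lambda_\alpha^*$ — and since the non-transverse locus is cut out by the conditions $\xi_\alpha(\cdot)\subset\eta_\alpha(g_-^P)$, these projective contractions assemble into the contraction clause of Definition \ref{bee} for $g$ on $\mathcal{F}_P$ and $\mathcal{F}_{P^-}$. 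Thus $\rho(\gamma)$ is $(r,\epsilon)$-proximal as soon as all the singular-value gaps of $\Lambda_\alpha(\rho(\gamma))$ exceed $C(r,\epsilon)$.

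To conclude I would use the regularity of convex representations. If $\gamma$ lies in the displayed set, then by the previous step at least one of the gaps of $\Lambda_\alpha(\rho(\gamma))$ is $<C(r,\epsilon)$. On the other hand a Zariski dense $P$-convex representation is $P$-Anosov, so there are constants $\mu>0$ and $\nu$ depending only on $\rho$ such that the first logarithmic singular-value gap of $\Lambda_\alpha(\rho(\gamma))$ is at least $\mu|\gamma|-\nu$, for all $\gamma\in\Gamma$ and all $\alpha\in\{\alpha_1,\dots,\alpha_k\}$ (see \cite{samb1,samb2,Be}). Combining the two inequalities forces $|\gamma|<(C(r,\epsilon)+\nu)/\mu$, so the set is contained in a finite ball of $\Gamma$, hence finite.

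The step I expect to be the main obstacle is the transfer in the second paragraph: deducing the contraction statement on $\mathcal{F}_P$ and $\mathcal{F}_{P^-}$ from the projective estimates in the $V_\alpha$, with a threshold $C(r,\epsilon)$ independent of $g$. This requires that the metric on $G/P$ be comparable to the one induced from $\prod_\alpha\mathbb{P}(V_\alpha)$, that the repelling locus of the non-proximal part of $g$ lie in a uniformly bounded neighbourhood of the union of the repelling hyperplanes, and that the good-norm identities relating $\|\Lambda_\alpha(g)v\|_\alpha$ to $a_P(g)$ (Lemma 6.4 of \cite{Q}) be applied with the correct normalisation; one must also combine Benoist's gap hypothesis with the angle lower bound coming from the first clause of Definition \ref{bee}. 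A minor additional point is to record the uniform gap estimate of the third paragraph in exactly the stated form; both facts are in the cited references but need care with uniform constants.
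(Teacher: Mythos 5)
The paper itself gives no proof of Proposition~\ref{sam}; it is quoted verbatim from Sambarino's thesis (\cite{samb2}, Corollary~3.12). So there is no in-paper argument to match against, and the evaluation has to be of your proposal on its own.

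Your overall scheme --- reduce to showing that membership in the set forces a bound on $\gamma$, then invoke discreteness --- is reasonable, but the crucial step you use to get that bound is not available. In the third paragraph you assert that ``a Zariski dense $P$-convex representation is $P$-Anosov, so there are constants $\mu>0$, $\nu$ with the first logarithmic singular-value gap of $\Lambda_\alpha(\rho\gamma)$ at least $\mu|\gamma|-\nu$,'' and you cite \cite{samb1,samb2,Be}. None of those references prove (or claim) that $P$-convexity in Sambarino's sense implies a linear lower bound for the Cartan projection gap in terms of word length. Benoist \cite{Be} treats asymptotics of Zariski dense subgroups, not the Anosov estimate; Sambarino's papers assume convex representations and prove counting/equidistribution results, never the Anosov contraction estimate. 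That implication (transverse, dynamics-preserving, Hölder boundary maps $\Rightarrow$ Anosov) is a genuine and much later theorem, due to Kapovich--Leeb--Porti and Guéritaud--Guichard--Kassel--Wienhard. So as written, the proof has a real gap: the linear singular-value lower bound is asserted but unsupported, and it is precisely the heavy-lifting step. The obstacle you flag in the second paragraph (transferring Benoist's projective contraction from $\prod_\alpha\mathbb{P}(V_\alpha)$ to $\mathcal{F}_P$ with a uniform threshold) is a legitimate technical point, but it is not where the argument breaks.

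There is a more elementary route that stays entirely inside the toolkit already present in the paper and sidesteps singular values, word length, and the Anosov machinery. By Proposition~\ref{Prop2}, $\lambda_P(\rho\gamma)=\beta_P(\gamma,\gamma_+)$ is a period of a Hölder cocycle, and by Proposition~\ref{Prop3} each linear form $\alpha$ in the interior of the dual cone gives a positive cocycle $\alpha\circ\beta_P$ with finite exponential growth rate, so for every $t$ only finitely many conjugacy classes have $\alpha(\lambda_P(\rho\gamma))\le t$. Benoist's lemma (the version relating the Jordan projection and the Gromov product to $(r,\epsilon)$-proximality, which is the same circle of estimates as Proposition~\ref{be}) gives a threshold $t=t(r,\epsilon)$ such that any proximal $g$ satisfying the Gromov-product bound and with all relevant components of $\lambda_P(g)$ exceeding $t$ is $(r,\epsilon)$-proximal. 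Hence any $\gamma$ in the bad set lies in one of finitely many conjugacy classes, and within each class only finitely many representatives can satisfy the Gromov-product bound because the cocycle identity for $\mathcal{G}_P$ (Lemma~\ref{CG}) shows that $[\gamma_-,\gamma_+]$ degenerates along any infinite sequence of conjugates. This yields finiteness directly, using only Propositions~\ref{Prop2}--\ref{Prop3}, Lemma~\ref{CG}, and the eigenvalue form of Benoist's lemma --- all of which are already part of the paper's setup. I would recommend rewriting along these lines rather than routing through the Anosov property.
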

\subsection{Proof of Lemma \ref{UD}}
\begin{proof}
First of all, observe that each conjugacy class $[\cdot]\in [\Gamma]$ has a representative $\gamma$ whose end fixed points $\gamma_{-}$ and $\gamma_{+}$ are far appart (with respect to the Gromov distance $d_G$ on the boundary $\partial \Gamma$) by a positive constant $\kappa$ independant from the class $c$ (since $\Gamma$ acts cocompactly i.e. with a compact fundamental domain). 
Then, since $\xi^-$ and $\xi^+$ are uniformely continuous, by the continuity of the positive function $ \exp [x, y]_\varphi$ on the compact set $\{(x,y)\in\partial \Gamma \times \partial\Gamma: d_{G}(x,y)\geq \kappa\}$,  
there exists a positive constant $r>0$ such that every conjugacy class $[\cdot]\in [\Gamma]$ can be represented by some $\gamma \in \Gamma$ with $\exp [\gamma_{-}, \gamma_{+}]_\varphi > r$.
Then we have for all $n>0$, 
\begin{equation}\label{E2}
\# [\Gamma]_t^\varphi
=\#\{[\gamma]\in [\Gamma]_t^\varphi  : \exp [\gamma_{-}, \gamma_{+}]_{\varphi} >r\}.
\end{equation}
By Proposition \ref{sam}, all $\gamma$'s in $(1)$ 
(except a finit number depending only on $r$ and $\epsilon$) are $(r, \epsilon)$-proximal. Thus, using the prime orbit theorem (see Sect. 6 Theorem \ref{samba}) applied to the cocycle $c_P^\varphi =\varphi \circ \beta_P$, we obtain
\[
h_{\varphi}te^{-h_\varphi t}\#\{[\gamma]\in [\Gamma]_t^\varphi: \rho (\gamma)\ is\ (r, \epsilon)-proximal\}\rightarrow 1,
\]as $t$ goes to infinity.
This proves the lemma.
\end{proof}
\section{Hölder cocycles and Ledrappier's correspondance}
\subsection{Hölder continuous cocycles}
The main reference in this section is the paper \cite{Led} by F. Ledrappier.
\begin{Def}[Ledrappier \cite{Led}]\label{def2}
A cocycle over $\partial \Gamma$, is a real valued function
$c:  \Gamma \times \partial \Gamma\rightarrow \mathbb{R}$,
such that, for all $\gamma_1, \gamma_2 \in \Gamma$ and $\xi \in \partial \Gamma$, 
\[
c(\gamma_1 \gamma_2, \xi)=c(\gamma_2, \gamma_1\cdot \xi)+c(\gamma_1, \xi).
\]
If, for all $\gamma \in \Gamma$, the map $\xi \rightarrow c(\gamma, \xi)$ is Hölder continuous on $\partial \Gamma$, we say that the cocycle $c$ is Hölder. The cocycle $c$ is positive if $c(\gamma, \xi)>0$, for all $(\gamma, \xi)\in   \Gamma \times \partial \Gamma$.
\end{Def}

Two Hölder cocycles are cohomologically equivalent if they differ by a Hölder continuous function $U: \partial M\rightarrow \mathbb{R}$ such that,
\[
c_1(\gamma, \xi)+c_2(\gamma, \xi)=U(\gamma \cdot \xi)-U(\xi).
\]
Given $\gamma \in \Gamma$, recall that $\gamma_+$ it's attractive fixed point in $\partial \Gamma$.
The numbers $c(\gamma, \gamma_{+})$ depend only on the conjugacy class $[\gamma]\in \Gamma$ and on the cohomological class of $c$ \cite{Led}. We call $c(\gamma, \gamma_{+})$ the periods of $c$. 
Recall the following important result by Ledrappier (see \cite{Led} p104-105).
\begin{thm}[Ledrappier \cite{Led}]\label{L}
The Liv$\check{s}$ic cohomological classes of $\Gamma$-invariant 
$\mathcal{C}^2$-functions $F: T^1M \rightarrow \mathbb{R}$
are in one-to-one correspondance with the cohomological classes of Hölder cocycles $c:  \Gamma \times \partial \Gamma\rightarrow \mathbb{R}$.
Moreover, the classes in correspondance have the same periods, i.e. $c(\gamma, \gamma^{+})=\int_{p}^{\gamma p}F$, the integral of $F$ over the geodesic segment $[p, \gamma p]$ (for any $p\in M$).
\end{thm}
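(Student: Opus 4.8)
The plan is to construct the correspondence explicitly in both directions and then check that it is a period‑preserving bijection; this is essentially Ledrappier's argument. Throughout, identify $\partial\Gamma$ with $\partial\tilde M$ and use the Hopf parametrization $T^1\tilde M\cong\partial\tilde M^{(2)}\times\mathbb R$ (a vector $v$ corresponds to the triple formed by its endpoints $v_-,v_+\in\partial\tilde M$ and a Busemann parameter $s$ measured from a fixed origin $o\in\tilde M$), under which the geodesic flow is translation in the $\mathbb R$‑factor and $\Gamma$ acts by $\gamma\cdot(x,y,s)=(\gamma x,\gamma y,s+b_\gamma(y))$ for a Busemann cocycle $b_\gamma$. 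A $\mathcal C^2$ (or merely Hölder) function on $T^1M$ is the same as a $\Gamma$‑invariant function on $T^1\tilde M$, and two such lie in the same Liv$\check{s}$ic class iff their difference is the geodesic‑flow derivative of a function on $T^1M$, iff they have equal integrals over every closed geodesic — the last equivalence being the Liv$\check{s}$ic theorem for the geodesic flow of the compact negatively curved manifold $M$ (curvature $\le-1$), which is Anosov, transitive and has the closing property.

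\textbf{From functions to cocycles.} For $p\in\tilde M$ and $\xi\in\partial\tilde M$ write $\int_p^z F=\int_0^{d(p,z)}F(\dot\gamma_{p,z}(t))\,dt$ along the unit‑speed geodesic $\gamma_{p,z}$ from $p$ to $z$, and define the regularized boundary integral
\[
\kappa_F(p,q,\xi)=\lim_{z\to\xi}\Bigl(\int_p^zF-\int_q^zF\Bigr).
\]
The first step — the analytic heart of the argument — is to prove that this limit exists and is Hölder in $\xi$: since the curvature is $\le-1$, the geodesics $[p,z]$ and $[q,z]$ are, outside a bounded initial segment, exponentially Hausdorff‑close to the common ray toward $\xi$, so their velocity vectors in $T^1\tilde M$ are exponentially close; as $F$ is Lipschitz and $T^1M$ compact, the tails of the two integrals cancel up to a summable error, and Hölder control in $\xi$ comes from the Hölder dependence of geodesics on their endpoints. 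One then checks the chain rule $\kappa_F(p,q,\xi)+\kappa_F(q,r,\xi)=\kappa_F(p,r,\xi)$ and the equivariance $\kappa_F(\gamma p,\gamma q,\gamma\xi)=\kappa_F(p,q,\xi)$ (immediate from $\Gamma$‑invariance of $F$), so that $c_F(\gamma,\xi)=\kappa_F(\gamma^{-1}o,o,\xi)$ is a Hölder cocycle in the sense of Definition \ref{def2} (up to the standard relabelling of conventions). For the periods, evaluate at $\xi=\gamma_+$ and let $z$ run to $\gamma_+$ along the axis $\sigma$ of $\gamma$: using $\int_{\gamma^{-1}o}^zF=\int_o^{\gamma z}F$ and the fact that $\dot\sigma$ descends to the periodic orbit of $\gamma$ of period $\ell(\gamma)$, the regularized difference collapses (via thin triangles) to $\int_0^{\ell(\gamma)}F(\dot\sigma)=\int_p^{\gamma p}F$ for $p$ on the axis, i.e. the integral of $F$ over the closed geodesic of $\gamma$. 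Finally, if $F-F'$ is the geodesic‑flow derivative of $u:T^1M\to\mathbb R$, then $\int_p^z(F-F')$ telescopes to $u(\dot\gamma_{p,z}(d(p,z)))-u(\dot\gamma_{p,z}(0))$; passing to the limit shows $c_F-c_{F'}$ is the coboundary of a Hölder function on $\partial\Gamma$, so $F\mapsto[c_F]$ descends to Liv$\check{s}$ic classes.

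\textbf{From cocycles to functions, and bijectivity.} Conversely, given a Hölder cocycle $c$, I would use the cocompactness of $\Gamma$ on $\tilde M$ to extend $(\gamma,\xi)\mapsto-c(\gamma,\xi)$ to a function $\beta^c:\tilde M\times\partial\tilde M\to\mathbb R$ with $\beta^c(\gamma p,\gamma\xi)=\beta^c(p,\xi)-c(\gamma,\xi)$, chosen smooth along every geodesic (interpolating off the orbit $\Gamma o$ by a $\Gamma$‑invariant partition of unity on $\tilde M$), and set $F_c(v)=-\tfrac{d}{dt}\big|_{t=0}\beta^c(\gamma_v(t),v_+)$. Since the $c$‑dependent part of $\beta^c$ depends only on the endpoint $v_+$ and not on the footpoint, its derivative along the geodesic through $v$ vanishes, so $F_c$ is $\Gamma$‑invariant and descends to $T^1M$; it is Hölder, and a standard smoothing within its Liv$\check{s}$ic class yields a $\mathcal C^2$ representative. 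A direct computation relating $\kappa_{F_c}$ to $\beta^c$ gives $c_{F_c}=c$, so $F\mapsto[c_F]$ is onto. For injectivity, cohomologous Hölder cocycles have the same periods $c(\gamma,\gamma_+)$ (as recalled before the theorem), so $[c_F]=[c_{F'}]$ forces $\int_\ell F=\int_\ell F'$ over every closed geodesic $\ell$, whence $F$ and $F'$ are Liv$\check{s}$ic‑cohomologous by the Liv$\check{s}$ic theorem. Together with the period computation above, this yields the asserted one‑to‑one, period‑preserving correspondence.

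\textbf{Main obstacle.} The delicate point is the convergence and Hölder regularity of the regularized integral $\kappa_F$ — equivalently, the well‑definedness of the ``$c$‑Busemann function'' $\beta^c$ — where one must quantitatively exploit pinched negative curvature (exponential convergence of asymptotic geodesics) together with the Hölder moduli of $F$ and of the boundary identification. Once this is in place, the cocycle identity, the $\Gamma$‑invariance of $F_c$, the period formula and the passage to cohomology classes are formal, and bijectivity is closed off by the Liv$\check{s}$ic theorem for the Anosov geodesic flow of $M$.
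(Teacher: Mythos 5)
This is a theorem the paper simply cites from Ledrappier \cite{Led} without proof, so there is no in-paper argument to compare your proposal against; I will therefore assess the proposal on its own terms. Your construction is indeed the standard Ledrappier-style one: regularize $\int_p^z F-\int_q^z F$ as $z\to\xi$ using exponential convergence of asymptotic geodesics to get a ``weighted Busemann'' cocycle $\kappa_F$, check the chain rule and $\Gamma$-equivariance, evaluate on the axis of $\gamma$ to get the period formula $c_F(\gamma,\gamma_+)=\int_{[p,\gamma p]}F$, observe that flow-coboundaries telescope to cocycle coboundaries, and close the loop with the Liv\v{s}ic theorem for the Anosov geodesic flow. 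All of this is sound in outline, and you are right to flag the convergence and H\"older regularity of $\kappa_F$ as the analytic crux. The cocycle identity you obtain is the usual Busemann one $c(\gamma_1\gamma_2,\xi)=c(\gamma_1,\gamma_2\xi)+c(\gamma_2,\xi)$; the paper's Definition~\ref{def2} writes it with $\gamma_1$ and $\gamma_2$ swapped, and your ``up to the standard relabelling of conventions'' caveat is the honest way to handle that.

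There is, however, one genuine gap: the sentence ``a standard smoothing within its Liv\v{s}ic class yields a $\mathcal{C}^2$ representative'' is not a standard fact, and in general it is false. A H\"older function on an Anosov flow need not be Liv\v{s}ic-cohomologous to a $\mathcal{C}^2$ (or even $\mathcal{C}^1$) function; the Liv\v{s}ic regularity theory propagates regularity of $F$ into the transfer function $u$, not the other way around. So, as written, your inverse map lands only in H\"older functions, and the claimed bijection onto $\mathcal{C}^2$ classes does not follow. In fairness, this is arguably a quirk of the theorem's statement rather than of your argument: everywhere the paper actually applies Theorem~\ref{L} (in the proof of Theorem~\ref{T1} and in explaining the constants of Theorem~A), it only asserts that $c$ is cohomologous to a \emph{H\"older} function $F_c$, not a $\mathcal{C}^2$ one. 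If you drop the $\mathcal{C}^2$ smoothing claim and state the correspondence between H\"older cocycle classes and Liv\v{s}ic classes of H\"older functions, your argument is complete modulo the analytic estimates on $\kappa_F$ that you yourself identify as the main work; as it stands, the smoothing step should be removed or the target category changed.
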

\subsection{A central limit theorem for Hölder cocycles}
For a cocycle $c: \Gamma \times \partial \Gamma \rightarrow \mathbb{R}$ with positive periods, we set
\[
[\Gamma]_t^c=\# \{ [\gamma]\in [\Gamma]: c(\gamma, \gamma_{+}) \leq t \}.
\]
\begin{thm}\label{T1}
Let $c: \Gamma \times \partial \Gamma \rightarrow \mathbb{R}$ be a Hölder continuous cocycle with positive periods and finite exponential growth rate $h_c$,
\[
h_c := \limsup_{t \longrightarrow \infty}
\frac{ \log \#[\Gamma]_t^c}{t} 
<\infty.
\]
There exist two constants $L_c>0$ and $\sigma_c>0$ such that for any $a, b\in \mathbb{R}$ with $a<b$ we have
\[
h_cte^{-h_c t} \# \{[\gamma]\in [\Gamma]_t^c : \frac{c(\gamma, \gamma_{+}) -L_ct}{\sigma_c \sqrt{t}}\in [a,b]\}
\rightarrow
 \mathcal{N}(a, b), \ as \ t\rightarrow \infty.
\]
\end{thm}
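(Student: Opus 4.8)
The plan is to convert the statement, via Ledrappier's correspondence (Theorem~\ref{L}), into a statement about closed orbits of a reparametrised geodesic flow, and then to apply the central limit theorem of Cantrell and Sharp (Theorem~\ref{sc}) together with the Parry--Pollicott prime orbit theorem; the hypothesis $h_{c}<\infty$, together with positivity of periods, is exactly what makes the reparametrisation legitimate.

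Concretely, I would first apply Theorem~\ref{L} to $c$ to obtain a $\Gamma$-invariant $C^{2}$ function $\widehat F$ on $T^{1}M\cong\Gamma\backslash(\partial\Gamma^{2}\times\mathbb R)$ whose Liv\v{s}ic class corresponds to the cohomology class of $c$, so that $c(\gamma,\gamma^{+})=\int_{\tau_{\gamma}}\widehat F$, the integral over the closed geodesic $\tau_{\gamma}$ attached to $[\gamma]$. Since $c$ has positive periods and finite exponential growth rate, a standard Liv\v{s}ic/closing-lemma argument shows $\widehat F$ is cohomologous to a strictly positive Hölder function $F$; this moves $c$ only within its cohomology class and keeps every period. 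I would then reparametrise the geodesic flow $\phi^{t}$ on $T^{1}M$ by $F$, obtaining a flow $\psi^{t}=\psi_{F}^{t}$. Since $\widetilde M$ has curvature $\le-1$, $\phi^{t}$ is a transitive Anosov flow, and a Hölder reparametrisation by a positive function is again Anosov with the same orbit structure, so $\psi^{t}$ is transitive Anosov (and topologically weak-mixing, under a harmless non-arithmeticity hypothesis on the periods of $c$, automatic in the Zariski-dense applications). Its periodic orbits are in natural bijection with $[\Gamma]$, the orbit labelled $[\gamma]$ having $\psi$-period $\int_{\tau_{\gamma}}F=c(\gamma,\gamma^{+})$. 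Hence $[\Gamma]_{t}^{c}$ is precisely the set of closed $\psi$-orbits of period $\le t$, the hypothesis $h_{c}<\infty$ reads $h_{\mathrm{top}}(\psi^{t})=h_{c}$, and the Parry--Pollicott prime orbit theorem for $\psi^{t}$ gives $h_{c}t\,e^{-h_{c}t}\#[\Gamma]_{t}^{c}\to1$; this is the case $[a,b]=\mathbb R$ and fixes the normalisation.

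It then remains to identify the constants and to quote the closed-orbit central limit theorem. Let $\mu_{c}$ be the unique measure of maximal entropy of $\psi^{t}$, put $L_{c}=\int F\,d\mu_{c}$, and let $\sigma_{c}^{2}$ be the asymptotic variance $\lim_{t\to\infty}\tfrac1t\int\big(\int_{0}^{t}F(\psi^{s}x)\,ds-L_{c}t\big)^{2}\,d\mu_{c}(x)$, which is positive precisely when $F$ is not cohomologous to a constant; this non-degeneracy is the substantive extra hypothesis needed on $c$, and it is automatic in Theorems~\ref{A} and~\ref{B} because there $\rho$ is Zariski dense. Feeding the data $(\psi^{t},F,\mu_{c},L_{c},\sigma_{c})$ into the Cantrell--Sharp central limit theorem (Theorem~\ref{sc}) for closed orbits of a hyperbolic flow then produces the convergence $h_{c}t\,e^{-h_{c}t}\#\{\tau:\ell_{\psi}(\tau)\le t,\ (\ell_{\psi}(\tau)-L_{c}t)/(\sigma_{c}\sqrt t)\in[a,b]\}\to\mathcal N(a,b)$, which is the assertion of the theorem once closed orbits are read as conjugacy classes $[\gamma]$ and $\ell_{\psi}$ as $c(\cdot,\cdot^{+})$.

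The genuine difficulty sits inside Theorem~\ref{sc}: the passage from the Gaussian law valid for the invariant measure $\mu_{c}$ to the same law over the very sparse set of closed orbits. That step goes through a Markov coding of $\psi^{t}$ and a spectral analysis of a two-parameter family of transfer operators $\mathcal L_{s,\theta}$ acting on Hölder functions on a section: one controls the leading eigenvalue $\lambda(s,\theta)$ and its analytic behaviour near $(h_{c},0)$ and then extracts the Gaussian by a contour-integration (saddle-point/Tauberian) argument, topological weak-mixing of $\psi^{t}$ being exactly what rules out unwanted peripheral spectrum. If Theorem~\ref{sc} is granted, the only remaining care is in the two soft steps above — producing the strictly positive Hölder representative $F$, and checking $\sigma_{c}>0$ (equivalently, that $c$ is not cohomologous to a scalar multiple of the geodesic-length cocycle) — which for cleanliness should be recorded as explicit standing assumptions on $c$.
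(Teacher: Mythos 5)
Your proposal follows essentially the same path as the paper: Ledrappier's correspondence, reparametrisation of the geodesic flow, identification of periods with closed $\psi$-orbits, a prime orbit theorem for the normalising factor, and then the Cantrell--Sharp closed-orbit CLT. Where you differ is in packaging: the paper does not pass through $T^{1}M$ and a separate Liv\v{s}ic argument to produce a strictly positive H\"older representative --- instead it invokes Sambarino's reparametrisation theorem (Theorem~\ref{samb}), which directly realises the translation flow on $\Gamma\backslash\partial^{2}\Gamma\times\mathbb{R}$ as a H\"older reparametrisation of the geodesic flow under exactly the hypotheses of Theorem~\ref{T1}, and it uses Sambarino's prime-orbit theorem for cocycles (Theorem~\ref{samba}) rather than quoting Parry--Pollicott for the reparametrised flow. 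Both routes are equivalent in substance; the paper's choice avoids reproving the standard positivity/closing-lemma step and keeps all counting intrinsic to the cocycle.

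Two of your side remarks are worth flagging as genuine content. First, you correctly observe that $\sigma_{c}>0$ requires $F$ not to be cohomologous to a constant (equivalently $\varphi\circ F_{P}-L_{c}$ not a coboundary); the paper's proof of Theorem~\ref{T1} applies Theorem~\ref{sc} under the stated ``not a coboundary'' hypothesis but does not explicitly record this non-degeneracy condition in the statement of Theorem~\ref{T1} --- it is only later, for Zariski-dense $P$-convex representations, that it becomes automatic. Second, you note the weak-mixing/non-arithmeticity issue; the paper handles this via the remark that the reparametrised flow admits no cross-section (Preissman, Section~5.4), which is the precise hypothesis under which Theorem~\ref{sc} applies. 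Neither point is a gap in your argument, but they are conditions that a careful reading would want recorded as standing assumptions on $c$ in Theorem~\ref{T1} itself, just as you suggest.
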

The constants $L_c$ and $\sigma_c$ depend only on the cocycle $c$.
By the correspondance Theorem \ref{L} of Ledrappier, the cocycle $c$ is cohomologus to a Hölder continuous function $F_c$ with the same periods. Let $\mu_c$ be the measure of maximal entropy of the Anosov flow $\psi^t :\Gamma\backslash \partial \Gamma^2 \times \mathbb{R} \circlearrowleft$ obtained as the reparametrisation by $c$ (or $F$) of the geodesic flow on $\Gamma\backslash \partial \Gamma^2 \times \mathbb{R}$ (see Theorem \ref{samb} Sect. $5.3$). Thus, applying Theorem \ref{sc} (see Sect. $5.3$), we obtain that $L_c=\int F_c d\mu_c$ and
\[
\sigma_c =\lim_{t\rightarrow+\infty}\int \left(\int_0^{t}F_c(\psi^t (x))dt-L_ct \right )^2d\mu_c (x).
\]
\subsection{Reparametrization of a Hölder continuous cocycle}
Let $X$ be a compact metric space and $\varphi^{t}: X \rightarrow X$ a continuous flow without fixed points. We consider in this paper, Hölder continuous cocycles $c$ over the flow $\varphi^{t}$.
A cocycle $c$ over the flow $\varphi^{t}$, is a function $c :X\times \mathbb{R} \rightarrow \mathbb{R}$ which satisfies the conditions:
\begin{itemize}
\item For all $x\in X$ and $s, t \in \mathbb{R}$,
\begin{equation}
c(x, s+t)=c(\varphi_t (x), s) + c(x,t) \label{eq: E} \tag{*},\ and
\end{equation}
\item For all $t\in \mathbb{R}$, the function $x\rightarrow c(x,t)$ is Hölder continuous (the exponent being independent from $t$).
\end{itemize}

Fundamental examples of such cocycles are given by a Hölder continuous function $F: X\times \mathbb{R \rightarrow} \mathbb{R}$ by setting,
$c_F (x,t)=\int_{0}^{t}F( \varphi_s (x) )ds$ for $t\geq 0$ and, 
$c_F (x,t)= -c_F (\varphi_t (x),-t)$, for $t<0$.

A cocycle $c$ is positive if for all $x\in X$, $c(x,t)>0$ for all $t\in \mathbb{R}$. In this case, for each $x\in X$, the function $t\rightarrow c(x,t)$ is increasing and in fact, it defines a homeomorphism of $\mathbb{R}$. It makes sens to consider the inverse cocyle $\hat{c}$:
\begin{equation}
\hat{c} (x, c(x,t))=c(x, \hat{c}(x,t))=t, \forall \ x\in X \label{eq: EE} \tag{**}.
\end{equation}
\begin{Def}\label{D1}
The reparametrization of the flow $\varphi$ by the positive cocycle $c$, is the flow $\psi$ defined for all $x\in X$ by $\psi^{t}(x)= \varphi^{\hat{c}(x,t)}(x)$. The flow $\psi$ is indeed Hölder by \eqref{eq: E} and \eqref{eq: EE}. Furthermore, both share the same periodic orbits; if $p(\theta)$ is the period of the periodic $\varphi$-orbit $\theta$, then $c(x, p(\theta))$ is it's period as a periodic $\psi$-orbit for all $x\in \theta$. 
\end{Def}
We end this part by the following reparametrization theorem of Sambarino \cite{samb1}.
\begin{thm}[Sambarino \cite{samb1}]\label{samb}
Let $c$ be a Hölder cocycle with positive periods such that $h_c$ is finite and positive. Then the following holds.
\begin{enumerate}
\item The action of $\Gamma$ on $\partial^2\Gamma \times \mathbb{R}$,
\[
\gamma(x,y,s)=(\gamma x, \gamma y, s-c(\gamma, y)),
\]
is proper and cocompact. The translation flow $\psi^t: \Gamma \backslash\partial^2\Gamma \times \mathbb{R} \circlearrowleft$,
\[
\psi^t \Gamma(x,y,s)=\Gamma(x, y, s-t)),
\]
is conjugated to a Hölder reparametrization of the geodesic flow $\mathcal{G}^t :\Gamma \backslash T^1\tilde{M}\circlearrowleft$.
\end{enumerate}
\end{thm}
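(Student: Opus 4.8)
\medskip
\noindent\emph{Proof proposal.}
The plan is to exhibit the translation flow on $\Gamma\backslash(\partial^2\Gamma\times\mathbb{R})$ as an explicit Hölder time change of the geodesic flow on $T^1M=\Gamma\backslash T^1\tilde M$, and then to deduce properness and cocompactness of the $\Gamma$-action on $\partial^2\Gamma\times\mathbb{R}$ from the compactness of $T^1M$. First I would set up the standard model of the geodesic flow: since $\tilde M$ is a Cartan--Hadamard manifold with curvature $\le -1$, the map $v\mapsto(v_-,v_+,r(v))$, where $v_\pm\in\partial\tilde M=\partial\Gamma$ are the endpoints of the geodesic carrying $v$ and $r(v)$ records the footpoint along that geodesic via the Busemann function based at $v_+$, is a homeomorphism $T^1\tilde M\cong\partial^2\Gamma\times\mathbb{R}$ in which $\mathcal{G}^t$ becomes the translation $(x,y,r)\mapsto(x,y,r-t)$ and $\Gamma$ acts isometrically as $\gamma(x,y,r)=(\gamma x,\gamma y,r-b(\gamma,y))$ for the metric Busemann cocycle $b$. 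Since $\Gamma$ is a torsion-free cocompact lattice in $\mathrm{Isom}(\tilde M)$, this $\Gamma$-action is free, properly discontinuous and cocompact.

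Next I would replace $c$ by a strictly positive Hölder potential. By (the Hölder version of) Ledrappier's correspondence, Theorem \ref{L}, $c$ is cohomologous to the cocycle $\gamma\mapsto\int_o^{\gamma o}\tilde F$ of a $\Gamma$-invariant Hölder function $\tilde F$ on $T^1\tilde M$, and the induced function $F$ on $T^1M$ satisfies $\int_{[\gamma]}F=c(\gamma,\gamma^+)$. The hypothesis that the periods of $c$ are positive says exactly that $\int_\theta F>0$ over every periodic orbit $\theta$ of the (topologically transitive, Anosov) geodesic flow; by the Livšic-type positivity result for such flows (see \cite{samb1,Led}), $F$ is then Livšic-cohomologous to a strictly positive Hölder function. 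Since cohomologous cocycles yield conjugate quotient actions and conjugate flows — the substitution $(x,y,s)\mapsto(x,y,s+U(y))$ intertwines the $c$-action with the one attached to $c$ modified by the coboundary of $U$ — I may assume henceforth that $F>0$ on $T^1\tilde M$ (and then $h_c$ is the topological entropy of the reparametrized flow constructed below, finite and positive by hypothesis).

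Then I would form the reparametrization and the conjugacy. For $F>0$, $\kappa(v,t)=\int_0^tF(\mathcal{G}^uv)\,du$ is a $\Gamma$-invariant positive Hölder cocycle over $\mathcal{G}^t$, so $t\mapsto\kappa(v,t)$ is an increasing homeomorphism of $\mathbb{R}$ with Hölder inverse $\hat\kappa(v,\cdot)$, and $\psi_F^t(v)=\mathcal{G}^{\hat\kappa(v,t)}(v)$ descends to a Hölder reparametrization of the geodesic flow on $T^1M$ in the sense of Definition \ref{D1}. It remains to build a $\Gamma$-equivariant homeomorphism $\Phi\colon\partial^2\Gamma\times\mathbb{R}\to T^1\tilde M$ carrying $(x,y,s)\mapsto(\gamma x,\gamma y,s-c(\gamma,y))$ to the $\Gamma$-action above. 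Concretely, for $(x,y)\in\partial^2\Gamma$ let $v_{x,y}$ be the oriented geodesic from $x$ to $y$, let $\beta(x,y)$ be a Gromov-product normalization point on it, and let $\Phi(x,y,s)$ be the unit tangent vector along $v_{x,y}$ at the point whose $\tilde F$-arclength from $\beta(x,y)$ equals $s$. That $\Phi$ is a homeomorphism, Hölder with Hölder inverse, follows from the Hölder dependence of $v_{x,y}$ on $(x,y)$, of $\tilde F$, and of $\hat\kappa$; that $\Phi$ is $\Gamma$-equivariant is precisely the cocycle identity for $c$ together with the $\Gamma$-invariance of $\tilde F$ and the period identity of Theorem \ref{L}. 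Passing to quotients, $\Phi$ induces a homeomorphism $\Gamma\backslash(\partial^2\Gamma\times\mathbb{R})\to T^1M$ sending the translation flow $\psi^t\Gamma(x,y,s)=\Gamma(x,y,s-t)$ to $\psi_F^t$; in particular the $\Gamma$-action defining the source is proper and cocompact (being conjugate to one on $T^1\tilde M$ that is), and $\psi^t$ is conjugate to a Hölder reparametrization of $\mathcal{G}^t$, as claimed.

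I expect the reduction to a strictly positive potential to be the main obstacle: positivity of the periods of $c$ only gives positivity of $F$ \emph{on average} over periodic orbits, and upgrading this to pointwise positivity within the Livšic-cohomology class relies on the closing and specification properties of the Anosov geodesic flow rather than on anything elementary. Once $F>0$ is in hand, the construction of $\Phi$ and the verification of its regularity are routine bookkeeping on top of Ledrappier's correspondence.
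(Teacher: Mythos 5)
The paper does not actually prove Theorem~\ref{samb}: it is quoted from Sambarino \cite{samb1}, and the paragraph that follows the statement only unwinds what the theorem means by describing the conjugacy $E(p,v)=(v_-,v_+,B^F_{v_+}(p,o))$. So there is no in-paper proof to compare against. That said, your outline is consistent with that description: Hopf parametrization of $T^1\tilde M$, Ledrappier's correspondence to convert $c$ into a Hölder potential $F$, the $F$-weighted Busemann function as the conjugating map (your $\Phi$ is essentially $E^{-1}$), and properness/cocompactness imported through the conjugacy. That is the right skeleton and is what Sambarino's proof looks like.

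The genuine gap is in the positivity reduction, which you rightly identify as the crux but whose justification is off. You argue that positive periods give $\int_\theta F>0$ for every periodic orbit $\theta$, and that a Livšic-type positivity result for Anosov flows then produces a strictly positive Hölder representative in the cohomology class. Positivity of all periodic averages is \emph{not} enough for that. The input a sub-action/positive-Livšic argument actually needs is a uniform linear lower bound $\int_\theta F\ge a\,p(\theta)-b$ with $a>0$, equivalently $\inf_\mu\int F\,d\mu>0$ over all invariant probability measures. If instead the ratios $\int_\theta F/p(\theta)$ accumulate at $0$ (which can happen with a non-periodic minimizing measure of zero average even though every periodic average is strictly positive), the best one can extract is a nonnegative representative, which does not allow a reparametrization. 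In your write-up the hypothesis $h_c<\infty$ is only invoked to identify the entropy of the reparametrized flow, but it is precisely this finiteness that forces the uniform linear lower bound on the periods: if $\int_\theta F/p(\theta)$ could approach $0$, specification would manufacture exponentially many conjugacy classes of arbitrarily small $c$-period and make $h_c=\infty$. So the difficulty is not just "closing and specification" generically; you must explain where $h_c<\infty$ enters and show that it (together with positivity of periods) yields $\inf_\mu\int F\,d\mu>0$, after which the positive-Livšic step and the rest of your construction go through.
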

By Ledrappier's Theorem \ref{L}, one can set $c=c_F$ for some Hölder continuous function $F$.
The reparametrization in the above theorem means that there exists a $\Gamma$-equivariant homeomorphism $E : T^1\tilde{M} \longrightarrow \partial^2\Gamma \times \mathbb{R}$ such that, for all $x=(p, v)\in T^1\tilde{M}$
\[
E(\mathcal{G}^{t} (p,v))=\psi^{c(x,t)}(E(p,v)),
\]where $c(x,t)=\int_0^t F(\mathcal{G}^s(x))ds$. It was proved in \cite{samb1} that $E$ is given by
\[
E(p,v)=(v_{-}, v_{+}, B_{v_{+}}^F)(p,o),
\]
where $o\in \tilde{M}$ is a fixed point (a base point), $v_{-}$ and $v_{+}$ are the end points in $\partial \tilde{M}$ of the geodesic with origine at $(p, v)\in T^1\tilde{M}$, and $\partial \tilde{M}\ni\xi \rightarrow B_{\xi}^F$ is the Busemann function based on the function $F$ (see \cite{samb1}).

Setting $F=1$ leads to the Hopf parametrization of the unit tangent bundle (based on the Busemann cocycle $B_{v_{+}}$),
\[
T^1\tilde{M}\ni(p,v)\longrightarrow (v_{-}, v_{+}, B_{v_{+}}(p,o)) \in \partial^2\Gamma \times \mathbb{R}.
\]
The geodesic flow of $T^1\tilde{M}$ is, by the way, a translation flow on $\partial^2\Gamma \times \mathbb{R}$,
\[
\mathcal{G}^t (v_{-}, v_{+}, B_{v_{+}}(p,o))=(v_{-}, v_{+}, B_{v_{+}}(p,o)+t).
\] 
\subsection{A central limit theorem for hyperbolic flows}
The geodesic flow of a negatively curved compact manifold does not admit a cross section. This a consequence of the Preissman theorem \cite{pr} (see \cite{samb3}).
Recall that a cross section for a flow $\phi^t : X\circlearrowleft$ is a closed subset $K$ of $X$ such that the function $K\times \mathbb{R} \ni (x,t)\rightarrow \phi^t (x)$ is a surjective local homeomorphism. This means essentially that the flow $\phi^t$ is not a suspension of a continuous flow. Moreover, this property is invariant under reparametrization \cite{samb1}. More precisely, the flow $\phi^t$ admits a cross section if and only if the same is true for any reparametrization $\psi^t$ of $\phi^t$. Consequently, the flow $\psi^t$ of Theorem \ref{samb} does not admit a cross section. This is equivalent to say that the subgroup of $\mathbb{R}$ generated by the periods of $c=c_F$ (i.e. the subgroup generated by $\{\int_{\tau}F: \tau \ periodic\}$) is dense \cite{samb3}. We can thus apply the following result of Cantrell and Scharp \cite{cs} (see also \cite{cs}, Remark 6.4 ) to the periodic orbits $\tau$ of the flow $\psi^t$.
\begin{thm}[Cantrell-Scharp \cite{cs}]\label{sc}
Suppose that $\psi^t : \Lambda \longrightarrow \Lambda$ is either a transitive Anosov flow with stable and unstable foliations which are not jointly integrable or a hyperbolic flow satisfying the approximability condition. Let $f:\Lambda \rightarrow\mathbb{R}$ be a Hölder continuous function that is not a coboundary. Then, there exists two constants $L$ and $\sigma_f >0$ such that, for all $a,b\mathbb{R}$,
\[
\frac{\#\{\tau \ periodic,\ p(\tau)\leq t: 
\frac{\int_{\tau}f-Lt}{\sigma_f\sqrt{t}}\in [a, b]\}}{\#\{\tau \ periodic,\ p(\tau)\leq t\}}\longrightarrow \mathcal{N}(a, b), 
\]as $t\rightarrow \infty$.
\end{thm}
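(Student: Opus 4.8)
The plan is to pass to a symbolic model of the flow, perform a transfer-operator computation with a complex twist in the orbit counting, and conclude by L\'evy's continuity theorem. First, using a Markov section one represents $\psi^t$ as a finite-to-one factor of the suspension of a two-sided subshift of finite type $(\Sigma,\sigma)$ under a strictly positive H\"older roof function $r$; integrating $f$ along the suspension fibres produces a H\"older function $g$ on $\Sigma$ such that each periodic point $\sigma^n x = x$ corresponds to a periodic orbit $\tau$ of $\psi^t$ with $p(\tau) = r^n(x) := \sum_{j=0}^{n-1} r(\sigma^j x)$ and $\int_\tau f = g^n(x) := \sum_{j=0}^{n-1} g(\sigma^j x)$. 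The boundary identifications of the Markov section and the reduction from all periodic points to primitive ones are handled by the usual inclusion--exclusion and only affect lower-order terms.

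Next, for $(s,w)$ in a complex neighbourhood of $(h,0)$, where $h$ is the topological entropy of $\psi^t$, consider the Ruelle transfer operators $(\mathcal{L}_{-sr+wg}\varphi)(x) = \sum_{\sigma y = x} e^{-sr(y)+wg(y)}\varphi(y)$ on the space of H\"older functions. By Ruelle--Perron--Frobenius and analytic perturbation theory each has a simple leading eigenvalue $\lambda(s,w)$, analytic in $(s,w)$, with $\lambda(h,0) = 1$ and a spectral gap; hence $\lambda(s,w) = 1$ defines an analytic branch $s = s(w)$ with $s(0) = h$. Differentiating this identity and using $\partial_s\log\lambda = -\int r\,d\mu_{s,w}$ and $\partial_w\log\lambda = \int g\,d\mu_{s,w}$ for the equilibrium state $\mu_{s,w}$, together with the variance formula for the Hessian of pressure, one finds $s'(0) = (\int g\,d\mu)/(\int r\,d\mu) = L$, the average of $f$ with respect to the measure of maximal entropy $\mu$ of $\psi^t$, while $s''(0) \ge 0$ is a dynamical variance which vanishes exactly when $f$ is cohomologous to a constant; by the non-coboundary hypothesis $s''(0) > 0$, and one sets $\sigma_f := \sqrt{s''(0)}$ (the normalisation making the quadratic term below equal to $-\theta^2/2$).

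Then, put $\pi_w(t) := \sum_{p(\tau)\le t} e^{w\int_\tau f}$, the sum over prime periodic orbits of $\psi^t$, and note that the twisted zeta function $\zeta_w(s) = \prod_\tau (1 - e^{-sp(\tau)+w\int_\tau f})^{-1}$ has $\log\zeta_w(s) = \sum_{n\ge 1} \frac1n \sum_{\sigma^n x = x} e^{-sr^n(x)+wg^n(x)}$, so by the previous step it extends meromorphically past $\{\Re s = h\}$ for $w$ near $0$, with a single simple pole at $s = s(w)$ and no further singularity on the line $\{\Re s = \Re s(w)\}$. Here the non-joint-integrability of the stable and unstable foliations --- equivalently the topological mixing of $\psi^t$, equivalently the fact that $r$ is not cohomologous to a lattice-valued function --- is precisely what excludes spurious poles on the critical line; in the general hyperbolic case this is supplied instead by the approximability hypothesis, which yields the needed Dolgopyat-type bounds on $\mathcal{L}_{-sr+wg}$ for large $|\Im s|$. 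A contour-shift and Tauberian argument in the style of Parry--Pollicott then gives, locally uniformly for $w$ near $0$,
\[
\pi_w(t) = \frac{e^{s(w)t}}{s(w)\,t}\,\bigl(C(w) + o(1)\bigr), \qquad C \text{ analytic}, \quad C(0) = 1;
\]
in particular $N(t) := \pi_0(t) \sim e^{ht}/(ht)$.

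Finally, fix $\theta \in \mathbb{R}$ and set $w(t) = i\theta/(\sigma_f\sqrt{t}) \to 0$. Since $i\theta\,\frac{\int_\tau f - Lt}{\sigma_f\sqrt{t}} = w(t)\int_\tau f - i\theta L\sqrt{t}/\sigma_f$ and, by the Taylor expansion of $s$ and the choice of $\sigma_f$, $t\,s(w(t)) = ht + iL\theta\sqrt{t}/\sigma_f - \tfrac12\theta^2 + O(t^{-1/2})$, one obtains (the sums being over prime orbits, so that the inner sum is $\pi_{w(t)}(t)$)
\[
\frac{1}{N(t)}\sum_{p(\tau)\le t}\exp\!\Bigl(i\theta\,\tfrac{\int_\tau f - Lt}{\sigma_f\sqrt{t}}\Bigr) = e^{-i\theta L\sqrt{t}/\sigma_f}\,\frac{\pi_{w(t)}(t)}{N(t)} \longrightarrow e^{-\theta^2/2}.
\]
By L\'evy's continuity theorem the empirical distributions of $(\int_\tau f - Lt)/(\sigma_f\sqrt{t})$ over $\{p(\tau)\le t\}$ converge weakly to $\mathcal{N}(0,1)$, which gives the stated limit $\mathcal{N}(a,b)$ for all $a < b$; and since passing from prime orbits to all periodic orbits changes $N(t)$ and $\pi_w(t)$ only by $O(e^{ht/2})$, the theorem follows as written. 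The main obstacle is the uniform twisted prime-orbit theorem of the third paragraph: extending $\zeta_w$ meromorphically a little to the left of $\Re s = s(w)$ with polynomial control in $\Im s$, uniformly in the complex parameter $w$ near $0$, so that one may legitimately evaluate at $w = w(t)$. This rests on Dolgopyat's spectral estimates for the transfer operators of the coded flow, where the non-joint-integrability / approximability hypothesis is used in an essential way; the remaining ingredients --- Markov coding, Ruelle--Perron--Frobenius theory, the pressure-derivative identities and L\'evy continuity --- are classical.
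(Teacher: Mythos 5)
The paper does not prove this theorem: it is imported verbatim from Cantrell and Sharp \cite{cs} and used as a black box in the proof of Theorem~\ref{T1}. So there is no in-paper proof to compare against, only the architecture of the cited argument.

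Your sketch does reproduce that architecture faithfully: Markov coding of the hyperbolic flow, a two-parameter family of transfer operators $\mathcal{L}_{-sr+wg}$ with a simple leading eigenvalue $\lambda(s,w)$, the implicit curve $s=s(w)$ through $(h,0)$, the identification of $L$ and $\sigma_f$ with the first and second derivatives of $s$ at $0$ via pressure-derivative/Abramov identities, a twisted zeta function $\zeta_w(s)$ with a single pole at $s(w)$ and Dolgopyat-type control away from it, a Tauberian asymptotic for $\pi_w(t)$ that is locally uniform in $w$, and finally L\'evy continuity after substituting $w(t)=i\theta/(\sigma_f\sqrt t)$. This is indeed what Cantrell--Sharp do, and you correctly flag the uniformity of the twisted prime-orbit theorem in $w$ as the technical crux.

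Two small imprecisions are worth fixing. First, the chain of ``equivalences'' you assert between non-joint-integrability of the strong stable/unstable foliations, topological mixing of $\psi^t$, and non-arithmeticity of the roof function is not correct: topological mixing (non-lattice roof) merely rules out poles of $\zeta_0$ on $\{\Re s = h\}$ other than $s=h$, whereas the Dolgopyat bound --- which is what the contour shift with uniformity in $\Im s$ and $w$ genuinely needs --- requires the strictly stronger non-joint-integrability (for Anosov) or approximability (for general hyperbolic) hypothesis; these are logically stacked, not equivalent. Second, your reduction from the flow to transfer operators silently passes from the two-sided subshift to a one-sided one; this needs the standard Sinai--Bowen coboundary trick to make $r$ and $g$ depend on future coordinates only, and one must check that this preservation of cohomology class respects the non-coboundary hypothesis on $f$ (so that $s''(0)>0$ survives the reduction). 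Neither point derails the argument, but they should be stated.
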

In the theorem, $p(\tau)$ is the least period of the periodic orbit $\tau$. Let $\mu$ be the measure of maximal entropy of the flow $\psi^t$. The constants $L$ and $\sigma_f$ are given respectively by $L=\int fd\mu$ and
\[
\sigma_f =\lim_{t\rightarrow+\infty}\int_{\Lambda} \left(\int_0^{t}f(\psi^t (x))dt-t\int fd\mu \right )^2d\mu (x).
\]
Note that this theorem is more general than Theorem \ref{T1}, in the sens that we don't have necessarily $p(\tau)=\int_\tau f$ for the periodic orbits $\tau$ of $\psi^t$.  
\section{Proof of the main results}
\subsection{Proof of Theorem \ref{T1}}
\begin{proof}
Under the assumptions of Theorem \ref{T1} ($c$ Hölder continuous and $0<h_c <\infty$), one can apply the reparametrizing Theorem \ref{samb}. We have then a proper and cocompact action $\Gamma \curvearrowright \backslash \partial^2 \Gamma \times \mathbb{R}$
\[
\gamma (x,y,s)=(\gamma x, \gamma y, s-c(\gamma, y)),
\]
and a translation flow on the quotient space, $\psi^{t}: \Gamma \backslash \partial^2 \Gamma \times \mathbb{R} \circlearrowleft$
\[
\psi^{t} \Gamma(x,y,s)=\Gamma (x,y, s-t).
\]
For $\gamma \in \Gamma$ primitive, the periods $c(\gamma, \gamma^+)$ of $c$ are the periods $p(\tau)$ of the periodic orbits $\tau$ of $\psi^{t}: \Gamma \backslash \partial^2 \Gamma \times \mathbb{R} \circlearrowleft$. 
Let $s\rightarrow \tau (s)=\Gamma \cdot (\gamma_{-}, \gamma_{+},s)$ a periodic orbit of $\psi^t$, i.e. the lift of $\tau$ to $\partial^2 \Gamma \times \mathbb{R}$, is $\tau=(\gamma_{-}, \gamma_{+}, s)$, with $\gamma \in \Gamma$ primitive and $s\in \mathbb{R}$  (we denote the orbit and the lifts by the same symbol if there is no confusion to be worried about). Since $\gamma (\gamma_{-}, \gamma_{+}, s)=(\gamma_{-}, \gamma_{+}, s-c(\gamma, \gamma^+))$, we have $p(\tau)=c(\gamma, \gamma^+)$. This is a straightforward verification by observing that, for all $s\in \mathbb{R}$,
\[
\Gamma \cdot (\gamma_{-}, \gamma_{+}, s-c(\gamma, \gamma_{+}))= \Gamma \cdot \gamma (\gamma_{-}, \gamma_{+}, s)=\Gamma \cdot (\gamma_{-}, \gamma_{+}, s)=\tau (s)=\tau (s-p(\tau))
\]
\[=\Gamma \cdot (\gamma_{-}, \gamma_{+}, s-p(\tau)).
\]
The flow $\psi^{t}$ is the reparametrization of the geodesic flow by a Hölder continuous cocycle $c_F$, with $F>0$,
and we have by Theorem \ref{L},
\begin{equation}\label{éq}
p(\tau)=c(\gamma, \gamma_+)=c_F (\gamma, \gamma_+)=\int_{\tau}F.
\end{equation}
Then, 
\begin{eqnarray*}\label{EQ}
&&\frac{\#\{[\gamma] \in [\Gamma]_t^c: 
\frac{c(\gamma, \gamma_{+})-Lt}{\sigma\sqrt{t}}\in [a, b]\}}
{\#[\Gamma]_t^c}\\
&=&
\frac{\#\{\tau \ periodic, \ p(\tau)\leq t: 
\frac{\int_{\tau}F-Lt}{\sigma\sqrt{t}}\in [a, b]\}}{\#\{\tau \ periodic, \ p(\tau)\leq t\}}.
\end{eqnarray*}
To proceed further, recall the following result (the prime orbit theorem for the cocycle $c$).
\begin{thm}[Sambarino \cite{samb1}]\label{samba}
Let $c: \Gamma \times \partial \Gamma \rightarrow \mathbb{R}$ be a Hölder continuous cocycle with positive periods and finite exponential growth rate $h_c$. Then,
\[
h_cte^{-h_c t}\#[\Gamma]_t^c \longrightarrow 1, \ as \ t\rightarrow \infty.
\]
\end{thm}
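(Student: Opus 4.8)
I would prove Theorem~\ref{samba}, following \cite{samb1}, by transporting it through the reparametrization Theorem~\ref{samb} into a prime orbit theorem for a hyperbolic flow and then applying the Parry--Pollicott counting machinery. First I would note that $h_c$ is in fact positive: by Ledrappier's Theorem~\ref{L} the periods of $c$ are the $F$-lengths $\int_\tau F$ of closed geodesics for a Hölder function $F$, and since these periodic averages are all positive $F$ is cohomologous to a strictly positive Hölder function $F_c$, so the periods are uniformly comparable to geodesic lengths and $h_c$ is a positive multiple of the topological entropy of the geodesic flow up to the reparametrization. Hence Theorem~\ref{samb} applies: the translation flow $\psi^t:\Gamma\backslash\partial^2\Gamma\times\mathbb{R}\circlearrowleft$ is conjugate to the reparametrization of the geodesic flow $\mathcal{G}^t$ of the compact negatively curved manifold $M=\Gamma\backslash\tilde{M}$ by the positive Hölder function $F_c$, and by Theorem~\ref{L} the periods match exactly: if $\tau$ is the periodic $\psi^t$-orbit carrying the closed geodesic of $[\gamma]$, then $c(\gamma,\gamma_+)=\int_\tau F_c=p(\tau)$. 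Primitive conjugacy classes correspond bijectively to primitive periodic orbits of $\psi^t$, and the classes $[\delta^k]$ with $k\ge 2$ and period $\le t$ number $O(e^{(h_c/2)t})$, so they are negligible; it therefore suffices to prove $\#\{\tau\ \mathrm{primitive}:\ p(\tau)\le t\}\sim e^{h t}/(h t)$ for some $h>0$ and to check $h=h_c$, and the latter is automatic once the asymptotic is known.

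For the flow $\psi^t$ I would invoke the thermodynamic formalism. Being a Hölder reparametrization of the topologically mixing Anosov flow $\mathcal{G}^t$ on a compact manifold, $\psi^t$ is a topologically mixing metric (Smale) flow in the sense of Bowen--Walters and Pollicott, so transfer operators, topological pressure and dynamical zeta functions are all at our disposal. Introduce
\[
\zeta(s)=\prod_{\tau\ \mathrm{primitive}}\bigl(1-e^{-s\,p(\tau)}\bigr)^{-1}=\prod_{\tau\ \mathrm{primitive}}\bigl(1-e^{-s\int_\tau F_c}\bigr)^{-1}.
\]
After the change of variable by $F_c$ that rewrites $\zeta$ as a Ruelle zeta function of $\mathcal{G}^t$ twisted by $-sF_c$, the spectral theory of the twisted transfer operators gives: $\zeta$ is holomorphic and non-vanishing on $\{\operatorname{Re} s>h\}$, where $h$ is the unique real root of $P_{\mathcal{G}}(-sF_c)=0$ (the pressure being that of the geodesic flow); $\zeta$ continues meromorphically past $\{\operatorname{Re} s=h\}$ with a single, simple pole, at $s=h$; and, because $\psi^t$ has no global cross section (Preissman, \cite{pr}, as recalled in Section~5.4) --- equivalently because the subgroup of $\mathbb{R}$ generated by the $F_c$-periods is dense --- $\zeta$ has neither a zero nor a further pole on the line $\{\operatorname{Re} s=h\}$, whence in particular $h>0$.

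The counting is then classical. Setting $\Phi(s)=\sum_{\tau\ \mathrm{primitive}}p(\tau)\,e^{-s\,p(\tau)}$, the facts above yield $\Phi(s)=(s-h)^{-1}+g(s)$ with $g$ holomorphic on a neighbourhood of $\{\operatorname{Re} s\ge h\}$ (the higher powers of primes contribute a term holomorphic across the line). Applying the Wiener--Ikehara Tauberian theorem to the positive measure $\sum_{\tau}p(\tau)\,\delta_{p(\tau)}$ on $(0,\infty)$ gives $\sum_{p(\tau)\le t}p(\tau)\sim e^{ht}/h$, and an integration by parts turns this into $\#\{\tau\ \mathrm{primitive}:\ p(\tau)\le t\}\sim e^{ht}/(ht)$. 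Combining with the first paragraph --- the bijection between primitive classes and primitive orbits, the period identity $c(\gamma,\gamma_+)=p(\tau)$, and the negligibility of the non-primitive classes --- we obtain $\#[\Gamma]_t^c\sim e^{h_c t}/(h_c t)$, that is, $h_c t\,e^{-h_c t}\,\#[\Gamma]_t^c\to 1$.

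I expect the middle step to be the main obstacle: the meromorphic continuation of $\zeta$ up to the line $\{\operatorname{Re} s=h\}$, and above all the absence there of zeros and of poles other than the one at $s=h$. This is exactly where hyperbolicity and the non-arithmeticity of the length spectrum (the no-cross-section property) are used in an essential way, via Dolgopyat-type contraction estimates for the twisted transfer operators of the merely Hölder reparametrized flow $\psi^t$; it is the technical heart of the statement. A more geometric alternative avoiding zeta functions would be to push the Bowen--Margulis measure of maximal entropy of $\mathcal{G}^t$ forward under the reparametrization, check the Margulis scaling property for the resulting measure of maximal entropy of $\psi^t$, and count periodic orbits directly by a closing-lemma and equidistribution argument; topological mixing would play the same role in that approach.
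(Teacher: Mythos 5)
The paper does not actually prove Theorem~\ref{samba}: it is quoted as a black box from Sambarino \cite{samb1} (the prime orbit theorem for H\"older cocycles), so there is no internal proof to compare against. Your reconstruction follows the same route as the cited source: pass through Ledrappier's correspondence (Theorem~\ref{L}) and the reparametrization Theorem~\ref{samb} to identify the periods of $c$ with the periods of the translation flow $\psi^t$, observe that $\psi^t$ is a topologically weakly mixing hyperbolic flow because it admits no cross section (Preissman, \cite{pr}), and then run the Parry--Pollicott zeta-function counting argument; this is indeed how the result is proved in \cite{samb1}. Two caveats are worth recording. First, your opening step is imprecise: positivity of all periodic averages of a H\"older function does \emph{not} by itself imply that the function is cohomologous to a strictly positive one (one needs the periods to be bounded below linearly in the geodesic length); in Sambarino's development the positivity of $F_c$, and hence $h_c>0$, is extracted from the properness and cocompactness of the $\Gamma$-action on $\partial^2\Gamma\times\mathbb{R}$, which is proved in \cite{samb1} under the hypotheses ``positive periods and $h_c<\infty$''. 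Note that the version of Theorem~\ref{samb} quoted in this paper takes $h_c>0$ as a hypothesis, so strictly within the paper's toolkit you would need to import the stronger statement from \cite{samb1} rather than derive $h_c>0$ as you do. Second, Dolgopyat-type contraction estimates are not needed here: the absence of zeros and of further poles of $\zeta$ on the line $\operatorname{Re} s=h$ follows from topological weak mixing via the classical spectral analysis of the twisted transfer operators; Dolgopyat bounds only enter when one wants error terms. Neither point affects the correctness of your overall strategy, which matches the cited proof.
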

Consequently, Theorem \ref{T1} is now a consequence of Theorem \ref{samba} and Theorem \ref{sc}.
\end{proof}

\subsection{Proof of Theorem \ref{A}}
\begin{proof}
We prove in this section the following result.
\begin{thm}\label{T4}
Let $\rho : \Gamma \rightarrow G$ be Zariski dense and $P$-convex representation, and fix $\varphi$ in the interior of $\mathcal{L}_P^{*}$, where $P$ is a parabolic subgroup of $G$.
There exist $L_P^{\varphi}$ and $\sigma_P^{\varphi} >0$ such that,
\[
h_{\varphi}te^{-h_{\varphi}t} 
\#  \{ [\gamma]\in [\Gamma]_t^{\varphi}: \frac{\varphi(\beta_P (\gamma, \gamma^+))-tL_P^{\varphi}}{\sigma_P^{\varphi} \sqrt{t}}\in [a,b]  \}
\rightarrow \mathcal{N}(a, b),  
\]as  $t\rightarrow \infty$.
\end{thm}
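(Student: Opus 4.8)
The plan is to reduce Theorem \ref{T4} to the already-established central limit theorem for Hölder cocycles, Theorem \ref{T1}, by identifying the function $c_P^{\varphi} := \varphi \circ \beta_P$ as such a cocycle and verifying its hypotheses. First I would observe that $c_P^{\varphi}$ is a real-valued Hölder continuous cocycle over $\partial\Gamma$: the cocycle identity for $\beta_P$ (inherited from the cocycle equation for $\sigma_P$ via $\beta_P(\gamma,x) = \sigma_P(\rho(\gamma),\xi^+(x))$) is preserved under the linear functional $\varphi$, and Hölder continuity follows from the Hölder continuity of $\xi^+$ together with the smoothness of $\sigma_P$ in the flag variable. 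The periods of $c_P^{\varphi}$ are exactly $\varphi(\beta_P(\gamma,\gamma^+)) = \varphi(\lambda_P(\rho\gamma))$ by Proposition \ref{Prop2}, and these are strictly positive because $\varphi$ lies in the \emph{interior} of the dual cone $\mathcal{L}_P^{*}$ (Lemma \ref{PC}): a linear form in the interior of $\mathcal{L}_P^{*}$ is strictly positive on $\mathcal{L}_P \setminus \{0\}$, and no period $\lambda_P(\rho\gamma)$ vanishes since $\rho(\gamma)$ is $P$-proximal for every infinite-order $\gamma$ by Proposition \ref{sa}.

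Next I would check the finiteness (and positivity) of the exponential growth rate $h_{\varphi} := h_{c_P^{\varphi}} = \limsup_{t\to\infty} \tfrac{1}{t}\log \#[\Gamma]_t^{\varphi}$. Finiteness is a consequence of the fact that $\varphi\circ\beta_P$ grows at least linearly in word length (again using $\varphi$ in the interior of $\mathcal{L}_P^{*}$ and the sub-additive/proper-cone structure from Benoist's theorem cited after Proposition \ref{Prop2}), so that $[\Gamma]_t^{\varphi}$ is comparable to a ball of controlled radius in $\Gamma$, whose cardinality grows at most exponentially; positivity follows similarly since the cone $\mathcal{L}_P$ has nonempty interior. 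With $0 < h_{\varphi} < \infty$ in hand, Theorem \ref{T1} applies verbatim to the cocycle $c = c_P^{\varphi}$ and produces constants $L_P^{\varphi} := L_{c_P^{\varphi}} > 0$ and $\sigma_P^{\varphi} := \sigma_{c_P^{\varphi}} > 0$ such that
\[
h_{\varphi}te^{-h_{\varphi}t}\,\#\Big\{[\gamma]\in[\Gamma]_t^{\varphi}:\ \frac{c_P^{\varphi}(\gamma,\gamma^+)-tL_P^{\varphi}}{\sigma_P^{\varphi}\sqrt{t}}\in[a,b]\Big\}\ \longrightarrow\ \mathcal{N}(a,b),
\]
and since $c_P^{\varphi}(\gamma,\gamma^+) = \varphi(\beta_P(\gamma,\gamma^+))$ this is exactly the assertion of Theorem \ref{T4}. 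The identification of $L_P^{\varphi}$ and $\sigma_P^{\varphi}$ in terms of the maximal entropy measure $\mu_P^{\varphi}$ of the reparametrized flow comes from the corresponding formulas in the discussion following Theorem \ref{T1} (which themselves rest on Theorems \ref{samb}, \ref{sc}, and \ref{L}).

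The main obstacle I anticipate is not the central-limit machinery, which is black-boxed in Theorem \ref{T1}, but rather the verification that $\sigma_P^{\varphi} > 0$, i.e. that the Hölder function $F_P^{\varphi}$ cohomologous to $c_P^{\varphi}$ (via Ledrappier's correspondence, Theorem \ref{L}) is \emph{not} a coboundary over the geodesic flow — this is precisely the non-degeneracy hypothesis needed to invoke Theorem \ref{sc}. Here one argues that if $F_P^{\varphi}$ were cohomologous to a constant, then all periods $\varphi(\lambda_P(\rho\gamma))$ would be integer multiples of a fixed real number, forcing the subgroup of $\mathbb{R}$ generated by the periods to be discrete; but by the discussion in Section 3.3 (the flow $\psi^t$ admits no cross-section, by Preissman's theorem and its invariance under reparametrization), this subgroup is dense, a contradiction. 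One also needs to confirm that $c_P^{\varphi}$ has positive periods in the genuine pointwise sense required by Definition \ref{def2} (not merely positive periods $c(\gamma,\gamma^+)$), which again uses that $\varphi$ is interior to $\mathcal{L}_P^{*}$ so that $\varphi\circ\sigma_P$ stays positive on the relevant part of the flag variety; this is the kind of routine but essential estimate I would spell out carefully.
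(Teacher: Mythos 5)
Your reduction of Theorem \ref{T4} to Theorem \ref{T1} --- by identifying $c_P^{\varphi} := \varphi \circ \beta_P$ as a Hölder cocycle with positive periods and finite positive exponential growth rate, then invoking Theorem \ref{T1} --- is exactly the route the paper takes. Where you reconstruct these verifications by hand (pointwise positivity of $\varphi$ on $\mathcal{L}_P\setminus\{0\}$, growth estimates from the proper-cone structure), the paper simply cites Proposition \ref{Prop3} from \cite{samb3}, which packages both the finiteness of $h_\varphi$ and the cohomology of $\varphi\circ F_P$ to a positive Hölder function; your more explicit version is consistent with this.

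One substantive correction to your closing paragraph. You treat ``$F_P^{\varphi}$ is not a coboundary'' (the hypothesis on $f$ in Theorem \ref{sc}) and ``the reparametrized flow has no cross section'' (the hypothesis on $\psi^t$) as the same condition, and you argue via density of the period group. These are two separate hypotheses. The not-a-coboundary condition is immediate here: a coboundary has vanishing integrals over every periodic orbit, whereas the periods $\varphi(\lambda_P(\rho\gamma))$ are strictly positive precisely because $\varphi$ is interior to $\mathcal{L}_P^{*}$ --- no density argument needed. The density-of-periods/no-cross-section discussion (Section 5.4, via Preissman's theorem and its invariance under reparametrization) is what verifies the \emph{flow} hypothesis of Theorem \ref{sc} (``not jointly integrable''/``approximability''), not the coboundary hypothesis. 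Also, ``cohomologous to a constant'' does not force the periods to be integer multiples of a fixed real number, so that intermediate claim is false; fortunately it isn't needed once the two hypotheses are disentangled.
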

Recall the following result.
\begin{proposition}[\cite{samb3}]\label{Prop3}
Let $\rho : \Gamma \rightarrow G$ be Zariski dense and $P$-convex representation, where $P$ is a parabolic subgroup of $G$. Consider $\varphi$
in the interior of $\mathcal{L}_P^{*}$. Then,
\begin{enumerate}
\item The cocycle $\varphi \circ \beta_P$ has finite exponential growth rate.
\item The function $\varphi \circ F_P$ is cohomologus to a Hölder  continuous positive function.  
\end{enumerate}
\end{proposition}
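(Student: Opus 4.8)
The plan is to derive both parts of the proposition from a single estimate: a linear lower bound for the periods in terms of the translation length in $\Gamma$. Write $\ell(\gamma)$ for the length of the closed geodesic of $M=\Gamma\backslash\tilde M$ in the free homotopy class of $[\gamma]$, i.e. the translation length of $\gamma$ on $\tilde M$, and recall from Proposition \ref{Prop2} that the periods of $\varphi\circ\beta_P$ are exactly the numbers $\varphi(\lambda_P(\rho\gamma))$. Two positivity facts are needed first. Since $\Gamma$ is torsion-free, Proposition \ref{sa} shows every $\rho(\gamma)$ with $\gamma\neq e$ is $P$-proximal, so $\lambda_P(\rho\gamma)$ lies in the interior of $\mathfrak J_P(\mathfrak a^+)$ and in particular in $\mathcal L_P\setminus\{0\}$; since by Lemma \ref{PC} the cone $\mathcal L_P$ is proper and $\varphi$ is interior to its dual, $\varphi$ is strictly positive on $\mathcal L_P\setminus\{0\}$, so all periods are positive. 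Moreover $\ker\mathfrak J_P\cap\mathfrak a^+=\{0\}$, so for $v\in\mathcal L_\rho\setminus\{0\}$ one has $\mathfrak J_P v\in\mathcal L_P\setminus\{0\}$; hence the linear form $\varphi\circ\mathfrak J_P$ is nonnegative on the limit cone $\mathcal L_\rho\subset\mathfrak a^+$ and vanishes only at $0$.

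The core step is the inequality $\varphi(\lambda_P(\rho\gamma))\ge c\,\ell(\gamma)$ for some $c>0$ and all $\gamma\neq e$. For each simple root $\alpha_i$ defining $P$ take the proximal irreducible Tits representation $\Lambda_{\alpha_i}:G\to PGL(V_{\alpha_i})$ of Section 2; by the discussion there, $\Lambda_{\alpha_i}\circ\rho$ is strictly convex, hence a projective Anosov representation of $\Gamma$, and therefore its top singular value gap grows linearly in word length: $\log(\sigma_1/\sigma_2)(\Lambda_{\alpha_i}(\rho\gamma))\ge c_i|\gamma|_\Gamma-C_i$ for suitable $c_i,C_i>0$. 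Replacing $\gamma$ by its powers and taking $\tfrac1n$-limits turns this into the eigenvalue estimate $\alpha_i(\lambda(\rho\gamma))=\log(\lambda_1/\lambda_2)(\Lambda_{\alpha_i}(\rho\gamma))\ge c_i\,\ell(\gamma)$, so summing over the finitely many $i$ gives $\sum_i\alpha_i(\lambda(\rho\gamma))\ge c'\,\ell(\gamma)$. Together with the easy upper bound $\|\lambda(\rho\gamma)\|\le C\,\ell(\gamma)$ (subadditivity of the Cartan projection applied to powers of a cyclically reduced representative), this shows each $\alpha_i$, and hence $\sum_i\alpha_i$, is bounded below by a positive multiple of $\|\cdot\|$ on $\{\lambda(\rho\gamma):\gamma\in\Gamma\}$, and therefore on $\mathcal L_\rho\setminus\{0\}$. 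Thus $\varphi\circ\mathfrak J_P$ and $\sum_i\alpha_i$ are two linear forms strictly positive on the proper closed cone $\mathcal L_\rho$, so by compactness of $\{v\in\mathcal L_\rho:\|v\|=1\}$ there is $\varepsilon>0$ with $\varphi\circ\mathfrak J_P\ge\varepsilon\sum_i\alpha_i$ on $\mathcal L_\rho$. Evaluating at $\lambda(\rho\gamma)$ and using $\varphi(\lambda_P(\rho\gamma))=(\varphi\circ\mathfrak J_P)(\lambda(\rho\gamma))$ yields $\varphi(\lambda_P(\rho\gamma))\ge\varepsilon c'\,\ell(\gamma)=:c\,\ell(\gamma)$.

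Part (1) is now immediate: $[\Gamma]_t^\varphi\subseteq\{[\gamma]:\ell(\gamma)\le t/c\}$, and the number of conjugacy classes of $\Gamma$ with $\ell(\gamma)\le T$ — i.e. of closed geodesics of the compact manifold $M$ of length at most $T$ — is $\le e^{hT}$ for large $T$, where $h<\infty$ is the topological entropy of the geodesic flow; hence $h_\varphi\le h/c<\infty$ (and $h_\varphi>0$ follows from the reverse inequality $\varphi(\lambda_P(\rho\gamma))\le C'\ell(\gamma)$). For part (2), let $\varphi\circ F_P:T^1M\to\mathbb R$ be the Hölder continuous function given by Ledrappier's correspondence (Theorem \ref{L}) for the cocycle $\varphi\circ\beta_P$; its integral over the closed geodesic $\tau_\gamma$ equals the period $\varphi(\lambda_P(\rho\gamma))$ while $p(\tau_\gamma)=\ell(\gamma)$, so the core inequality reads $\int_\tau\varphi\circ F_P\ge c\,p(\tau)$ for every closed orbit $\tau$. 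Since the geodesic flow of the compact negatively curved manifold $M$ is a transitive Anosov flow, periodic-orbit empirical measures are weak-$*$ dense among its invariant probability measures, so $\int_{T^1M}(\varphi\circ F_P)\,d\mu\ge c$ for every flow-invariant probability measure $\mu$. A Hölder continuous function over a transitive Anosov flow whose integral against every invariant measure is bounded below by a positive constant is cohomologous to a strictly positive Hölder continuous function — the standard positivity criterion used in \cite{samb1,samb3}, proved e.g. via a bounded subaction — and applying it to $\varphi\circ F_P$ proves (2).

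The main obstacle is the core inequality of the second paragraph, and inside it the two imported facts that do the real work: that each $\Lambda_{\alpha_i}\circ\rho$ is projective Anosov (so that the singular value gap grows linearly in word length) and that this linear growth passes to the translation length and to eigenvalue ratios. Once these are in place, the comparison of linear forms on the Benoist limit cone and the Anosov-flow positivity criterion are essentially soft.
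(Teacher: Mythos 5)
First, note that the paper does not prove Proposition \ref{Prop3} at all: it is quoted verbatim from \cite{samb3} and used as a black box, so there is no internal proof to compare yours against. Judged on its own terms, your architecture is reasonable and the endgame is right: once the core inequality $\varphi(\lambda_P(\rho\gamma))\geq c\,\ell(\gamma)$ is in hand, part (1) follows from the closed-geodesic counting bound, and part (2) follows from Sigmund's density of periodic-orbit measures plus the positive-Liv\v{s}ic criterion (which is exactly Sambarino's Lemma 3.2 in \cite{samb1}). The reduction of everything to a comparison of linear forms on the Benoist limit cone is also sound.

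The genuine gap is the derivation of the core inequality. You assert that each $\Lambda_{\alpha_i}\circ\rho$ is ``strictly convex, hence a projective Anosov representation, and therefore its top singular value gap grows linearly in word length.'' Section 2 of the paper establishes only strict convexity, i.e.\ the existence of transverse H\"older equivariant boundary maps; it says nothing about the Anosov property or uniform gaps, so ``by the discussion there'' does not deliver this step. The implication ``transverse equivariant boundary maps $\Rightarrow$ Anosov'' is false in general and is a substantive theorem even in the case you need (it holds for \emph{irreducible} representations with \emph{dynamics-preserving} transverse maps, by Guichard--Wienhard; irreducibility of $\Lambda_{\alpha_i}\circ\rho$ comes from Zariski density, and the dynamics-preserving property from Proposition \ref{sa}, but neither is invoked). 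Since this single step carries the entire analytic content of the proposition --- everything after it is soft --- the proof is incomplete as written. To close it you should either cite the Guichard--Wienhard criterion explicitly with its hypotheses verified, or follow the route actually taken in \cite{samb1,samb3}, which obtains the linear lower bound on $\varphi(\lambda_P(\rho\gamma))$ from the Gromov product and $(r,\epsilon)$-proximality machinery (the content of Sections 3--4 of this paper together with Benoist's comparison, Proposition \ref{be}) rather than from the Anosov formalism. A second, minor, unjustified point is the claim $\ker\mathfrak{J}_P\cap\mathfrak{a}^+=\{0\}$ and the assertion that $P$-proximality forces $\lambda_P(\rho\gamma)\neq 0$; both are true but deserve a word (e.g.\ via the fact that the fundamental weights $\omega_{\alpha_i}$ factor through $\mathfrak{J}_P$ and are positive on $\mathfrak{a}^+\setminus\{0\}$).
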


Let $\varphi \circ F_P : T^1M \longrightarrow \mathbb{R}$ in the Liv$\check{s}$ic cohomological class of $\mathcal{C}^2$ real functions on $T^1M$ given by the Ledrappier's Theorem \ref{L} and representing the cocycle $\varphi \circ \beta_P$. Both $\varphi \circ \beta_P$ and $\varphi \circ F_P$ have the same periods. By Proposition \ref{Prop2}, the positive numbers $\varphi (\lambda_P (\rho \gamma))$  are the periods of $\varphi \circ \beta_P$ (and $\varphi \circ F_P$).

Consequently, the positive numbers $\varphi (\lambda_P (\rho \gamma))$ are the periods of an Anosov flow that is a reparametrization of the geodesic flow by $\varphi \circ F_P$ (which is cohomologus to a Hölder  continuous positive function by Proposition \ref{Prop3}). Theorem \ref{T4} is therefore a consequence of Theorem \ref{T1}.
\end{proof}
\subsection{Proof of Theorem \ref{B}}
\begin{proof}
Let $a: G \rightarrow \mathfrak{a}$ be the Cartan projection and set $a_P= \mathfrak{J}_P \circ a: G \rightarrow \mathfrak{a}_P$. 
The objective of this section is the proof of the following result (a corollary of Theorem \ref{T4}). Recall that $a: G \rightarrow \mathfrak{a}$ is the Cartan projection and $a_P= \mathfrak{J}_P \circ a: G \rightarrow \mathfrak{a}_P$. 
\begin{thm}\label{TT4}
Let $\rho : \Gamma \rightarrow G$ be Zariski dense and $P$-convex representation, and fix $\varphi$ in the interior of $\mathcal{L}_P^{*}$, where $P$ is a parabolic subgroup of $G$. Then,
\[
h_{\varphi}te^{-h_{\varphi}t} \#  \{[\gamma] \in [\Gamma]_t^{\varphi}: \frac{ \varphi(a_P((\rho\gamma)) )-tL_P^{\varphi}}{\sigma_P^{\varphi} \sqrt{t}}\in [a,b] \}
\rightarrow
\mathcal{N}(a, b),
\]as  $t\rightarrow \infty$, where $L_P^{\varphi}$ and $\sigma_P^{\varphi}$ are the constants of Theorem \ref{T4}.
\end{thm}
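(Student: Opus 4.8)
The plan is to deduce Theorem \ref{TT4} from Theorem \ref{T4} by showing that, along almost all conjugacy classes, the quantity $\varphi(a_P(\rho\gamma))$ differs from the period $\varphi(\beta_P(\gamma,\gamma^+))=\varphi(\lambda_P(\rho\gamma))$ by an error that is negligible on the scale $\sqrt{t}$. More precisely, recall from Proposition \ref{Prop2} that $\beta_P(\gamma,\gamma^+)=\lambda_P(\rho\gamma)$, so the two theorems compare the Jordan projection $\lambda_P$ with the Cartan projection $a_P$ of $\rho\gamma$. The first step is to establish a uniform bound $\|a_P(g)-\lambda_P(g)\|_{\mathfrak{a}_P}\le C$ whenever $g$ is $(r,\epsilon)$-proximal on $\mathcal{F}_P$, with $C=C(r,\epsilon)$ independent of $g$; this is a standard consequence of the relation between the Cartan and Jordan projections for proximal elements — $a_P(g)$ and $\lambda_P(g)$ agree up to a term controlled by $\exp\|\mathcal{G}_P(g_-^P,g_+^P)\|$, which is bounded below for $(r,\epsilon)$-proximal elements — and can be invoked from \cite{Be} or \cite{samb2} essentially verbatim.

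The second step is to transfer this from ``$(r,\epsilon)$-proximal elements'' to ``almost all conjugacy classes in $[\Gamma]_t^\varphi$''. By Proposition \ref{sa} every $\rho\gamma$ is $P$-proximal, and by Lemma \ref{UD}, for any fixed $r,\epsilon$ we have
\[
h_\varphi t e^{-h_\varphi t}\,\#\{[\gamma]\in[\Gamma]_t^\varphi:\ \rho(\gamma)\text{ is }(r,\epsilon)\text{-proximal}\}\longrightarrow 1
\]
as $t\to\infty$, while the prime orbit theorem (Theorem \ref{samba}) gives $h_\varphi t e^{-h_\varphi t}\,\#[\Gamma]_t^\varphi\to 1$. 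Hence the conjugacy classes for which $\rho(\gamma)$ fails to be $(r,\epsilon)$-proximal form a subset of $[\Gamma]_t^\varphi$ whose normalized cardinality tends to $0$; after applying $\varphi$ (using that $\varphi$ is a fixed linear form, hence Lipschitz with some constant $\|\varphi\|$ on $\mathfrak{a}_P$) we get that, outside a set of $[\Gamma]_t^\varphi$ of vanishing normalized size,
\[
\bigl|\varphi(a_P(\rho\gamma))-\varphi(\beta_P(\gamma,\gamma^+))\bigr|\le \|\varphi\|\,C(r,\epsilon).
\]

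The third step is the elementary ``Slutsky-type'' comparison: if two sequences of counting measures, suitably normalized, one of which converges to $\mathcal{N}(a,b)$, differ only by a deterministic bounded shift $O(1)$ on all but a vanishing proportion of points, then after dividing the shift by $\sigma_P^\varphi\sqrt{t}\to\infty$ the second family converges to the same Gaussian. Concretely, fix $\delta>0$; the number of $[\gamma]\in[\Gamma]_t^\varphi$ with $\frac{\varphi(a_P(\rho\gamma))-tL_P^\varphi}{\sigma_P^\varphi\sqrt t}\in[a,b]$ is sandwiched, up to the negligible non-proximal exceptions, between the counts with $\frac{\varphi(\beta_P(\gamma,\gamma^+))-tL_P^\varphi}{\sigma_P^\varphi\sqrt t}\in[a-\delta,b+\delta]$ and $\in[a+\delta,b-\delta]$, for $t$ large enough that $\|\varphi\|C(r,\epsilon)/(\sigma_P^\varphi\sqrt t)<\delta$. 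Applying Theorem \ref{T4} to these slightly enlarged and shrunk intervals and letting first $t\to\infty$ and then $\delta\to0$, using continuity of $x\mapsto\mathcal{N}(a,b)$ in the endpoints, yields the claim with the same constants $L_P^\varphi$, $\sigma_P^\varphi$.

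The main obstacle is the first step: obtaining the bound $\|a_P(g)-\lambda_P(g)\|\le C(r,\epsilon)$ \emph{uniformly} over all $(r,\epsilon)$-proximal $g\in G$, rather than the easy pointwise statement that $\tfrac1n a_P(g^n)\to\lambda_P(g)$. This uniformity is exactly what Benoist's $(r,\epsilon)$-proximality machinery (Definition \ref{bee}) is designed to furnish — the condition $\exp\|\mathcal{G}_P(g_-^P,g_+^P)\|>r$ keeps the attracting and repelling flags a definite distance apart, which in turn controls the discrepancy between the Cartan and Jordan decompositions — so the work is really bookkeeping with the good norms $\|\cdot\|_\alpha$ and the fundamental weights $\chi_\alpha$ of Section 3.3, transplanted through the proximal representations $\Lambda_\alpha$. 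Everything after that is soft.
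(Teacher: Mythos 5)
Your proof follows the paper's argument for Theorem~\ref{TT4} essentially step for step: use Benoist's Proposition~\ref{be} to control $a_P-\lambda_P$ by the Gromov product $\mathcal{G}_P(g_-^P,g_+^P)$ for $(r,\epsilon)$-proximal elements; combine with the compactness argument behind Lemma~\ref{UD} (representatives chosen with endpoints at Gromov distance $\ge\kappa$, so $[\gamma_-,\gamma_+]_\varphi$ lives in a compact set) to get $|\varphi(a_P(\rho\gamma))-\varphi(\lambda_P(\rho\gamma))|=O(1)$ outside a normalized-negligible set of classes; and conclude with the $\pm\delta$ interval sandwich and Theorem~\ref{T4}. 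The one place you speak a bit loosely is in framing step~1 as a uniform bound ``over all $(r,\epsilon)$-proximal $g$'': Proposition~\ref{be} gives $a_P(g)-\lambda_P(g)=-\mathcal{G}_P(g_-^P,g_+^P)+O(\delta)$, but the \emph{boundedness} of $\mathcal{G}_P$ in the paper comes from the cocompactness argument restricting to the chosen representatives, not from the proximality hypothesis alone — the paper uses both Lemma~\ref{UD} and Proposition~\ref{sam} at that point, exactly as you do a sentence later, so the substance matches even if the framing of the ``main obstacle'' attributes a little too much to $(r,\epsilon)$-proximality by itself.
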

Recall the following important result of Benosit \cite{Be}.
\begin{proposition}[Benoist \cite{Be}]\label{be}
Let $r>0$ and $\delta >0$. Then there exists $\epsilon >0$ such that for any $g\in G$, $(r,\epsilon)$-proximal on $\mathcal{F}_P$ one has,
\[
\|a_P (g)-\lambda_P(g) +\mathcal{G}_P (g_{-}^P , g_{+}^P )\|_{\mathfrak{a}_P}\leq \delta.
\]
\end{proposition}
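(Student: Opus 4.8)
The plan is to reduce the whole statement to an estimate about proximal matrices, via the finite family of Tits representations attached to the simple roots defining $P$. Write $P=P_{\{\alpha_1,\dots,\alpha_k\}}$ and recall from Section~3.3 that the highest weights $\chi_{\alpha_1},\dots,\chi_{\alpha_k}$ of the proximal representations $\Lambda_{\alpha_i}:G\to PGL(V_{\alpha_i})$ form a basis of $\mathfrak{a}_P^{*}$, so there is a constant $C_0=C_0(G,P)$ with $\|u\|_{\mathfrak{a}_P}\le C_0\max_i|\chi_{\alpha_i}(u)|$ for all $u\in\mathfrak{a}_P$. Applying each $\chi_{\alpha_i}$ to $a_P(g)-\lambda_P(g)+\mathcal{G}_P(g_-^P,g_+^P)$ and using that $\chi_{\alpha_i}\in\mathfrak{a}_P^{*}$ is $W_P$-invariant (hence $\chi_{\alpha_i}\circ a_P=\chi_{\alpha_i}\circ a$ and $\chi_{\alpha_i}\circ\lambda_P=\chi_{\alpha_i}\circ\lambda$), the good-norm dictionary of Quint \cite{Q} ($\chi_{\alpha_i}(a(g))=\log\|\Lambda_{\alpha_i}(g)\|$ and $\chi_{\alpha_i}(\lambda(g))=\log\lambda_1(\Lambda_{\alpha_i}(g))$, the spectral radius), and the definition $\chi_{\alpha_i}(\mathcal{G}_P(g_-^P,g_+^P))=\log\frac{|\theta(v)|}{\|\theta\|_{\alpha_i}\|v\|_{\alpha_i}}$ with $v$ spanning $\xi_{\alpha_i}(g_+^P)$ and $\theta$ spanning $\eta_{\alpha_i}(g_-^P)$, the claim becomes: for $h:=\Lambda_{\alpha_i}(g)$ the quantity $\log\|h\|-\log\lambda_1(h)+\log d(x_h^+,x_h^-)$ is small, where $d(x_h^+,x_h^-):=\frac{|\theta(v)|}{\|\theta\|\|v\|}\in(0,1]$ and, because $\xi_{\alpha_i},\eta_{\alpha_i}$ are continuous and equivariant, the dynamics $g^{n}x\to g_+^P$ on $\mathcal{F}_P$ transports to $h^{n}\xi_{\alpha_i}(x)\to\xi_{\alpha_i}(g_+^P)$, so that $h$ is a proximal matrix, $x_h^+=\xi_{\alpha_i}(g_+^P)=\mathbb{R}v$ is its attracting eigenline and $H_h:=\ker\theta$ (given by $\eta_{\alpha_i}(g_-^P)$) is its $h$-invariant complement, the repelling hyperplane.

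Second, I would establish the matrix estimate by elementary linear algebra. Let $h\in GL(V)$ be proximal, $V=\mathbb{R}v\oplus H_h$ the $h$-invariant splitting with $\|v\|=1$, and let $P_+:V\to\mathbb{R}v$ be the projection with kernel $H_h$; for a good (Euclidean) norm one has $\|P_+\|=1/d$ and $\|\mathrm{Id}-P_+\|=1/d$ with $d=d(x_h^+,x_h^-)$. Since $hP_+=\lambda_1(h)P_+$ and $h(\mathrm{Id}-P_+)=(h|_{H_h})\circ(\mathrm{Id}-P_+)$, the triangle inequality gives
\[
\Bigl|\,\|h\|-\tfrac{\lambda_1(h)}{d}\,\Bigr|\le\tfrac{1}{d}\,\|h|_{H_h}\|,
\]
hence, setting $\eta(h):=\|h|_{H_h}\|/\lambda_1(h)$ and dividing by $\lambda_1(h)/d$,
\[
\Bigl|\,\tfrac{\|h\|\,d}{\lambda_1(h)}-1\,\Bigr|\le\eta(h),
\]
so for $\eta(h)\le\tfrac12$ one gets $\bigl|\log\|h\|-\log\lambda_1(h)+\log d\bigr|\le-\log(1-\eta(h))\le 2\,\eta(h)$. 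Everything therefore reduces to showing that $\eta(\Lambda_{\alpha_i}(g))$ can be made arbitrarily small.

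Third — the step I expect to be the main obstacle — I would show that $(r,\epsilon)$-proximality of $g$ on $\mathcal{F}_P$ forces $\eta(\Lambda_{\alpha_i}(g))\le\omega(\epsilon)$ for some function $\omega$ with $\omega(\epsilon)\to 0$ as $\epsilon\to 0$, uniformly in $g$ for $r$ fixed. Pushing the contraction condition of Definition~\ref{bee} through the Hölder equivariant maps $\xi_{\alpha_i},\eta_{\alpha_i}$, the matrix $h=\Lambda_{\alpha_i}(g)$ sends every line making angle at least $\epsilon'$ with $H_h$ into the $\epsilon'$-cone around $\mathbb{R}v$, where $\epsilon'=\epsilon'(\epsilon)\to 0$. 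Testing this on a line $\mathbb{R}(sv+w_0)$ with $w_0\in H_h$ and $|s|$ just past the threshold that puts it outside the $\epsilon'$-tube around $H_h$, and comparing with its image $\mathbb{R}(s\lambda_1(h)v+hw_0)$, which must lie in the $\epsilon'$-cone, a direct computation bounds $\|h|_{H_h}\|$ by a multiple of $(\epsilon')^2\lambda_1(h)$, i.e. $\eta(h)\lesssim(\epsilon')^2$. The two delicate points are: (a) uniformity of the moduli of continuity of $\xi_{\alpha_i},\eta_{\alpha_i}$, so that $\epsilon'(\epsilon)$ does not depend on $g$; and (b) the fixed-point configuration of $h$ must not degenerate for the threshold argument to be uniform — this is precisely why Definition~\ref{bee} includes the lower bound $\exp\|\mathcal{G}_P(g_-^P,g_+^P)\|_{\mathfrak{a}_P}>r$, which keeps the relative position of $x_h^+$ and $x_h^-$ under control. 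This step is, in essence, Benoist's core lemma on $(r,\epsilon)$-proximal linear maps \cite{Be}.

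Finally, assembling: given $\delta>0$, choose $\epsilon>0$ small enough that $\eta(\Lambda_{\alpha_i}(g))\le\delta/(2C_0)$ for all $i$ and all $(r,\epsilon)$-proximal $g$; combining the first two steps,
\[
\|a_P(g)-\lambda_P(g)+\mathcal{G}_P(g_-^P,g_+^P)\|_{\mathfrak{a}_P}\le C_0\max_i 2\,\eta(\Lambda_{\alpha_i}(g))\le\delta,
\]
as required, with $C_0$ depending only on $G$ and $P$ and $\epsilon$ only on $\delta$ and $r$.
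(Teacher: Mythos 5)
First, a point of comparison: the paper does not actually prove this proposition --- it is quoted from Benoist \cite{Be} (``Recall the following important result\dots''), so there is no in-paper argument to measure yours against. Your architecture is nonetheless the standard one underlying Benoist's proof: reduce to the proximal Tits representations $\Lambda_{\alpha_i}$ via the basis $\{\chi_{\alpha_i}\}$ of $\mathfrak{a}_P^{*}$ and Quint's good-norm identities $\chi_{\alpha_i}(a(g))=\log\|\Lambda_{\alpha_i}(g)\|$, $\chi_{\alpha_i}(\lambda(g))=\log\lambda_1(\Lambda_{\alpha_i}(g))$, and then prove a single estimate for proximal matrices by writing $h=\lambda_1(h)P_+ + h(\mathrm{Id}-P_+)$. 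Your first two steps and the final assembly are correct as written (including $\|P_+\|=\|\mathrm{Id}-P_+\|=1/d$ for a Euclidean norm and the reduction of the whole statement to making $\eta(h)=\|h|_{H_h}\|/\lambda_1(h)$ small).

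The gap is exactly where you suspected, but it is not merely a uniformity issue: your third step does not follow from the hypothesis as stated. Definition \ref{bee} is a contraction condition on $\mathcal{F}_P$, and under $\xi_{\alpha_i}$ it only constrains the action of $h=\Lambda_{\alpha_i}(g)$ on the closed orbit $\xi_{\alpha_i}(\mathcal{F}_P)\subset\mathbb{P}(V_{\alpha_i})$, which is a proper subvariety. The test line $\mathbb{R}(sv+w_0)$, with $w_0\in H_h$ chosen to realize $\|h|_{H_h}\|$, has no reason to lie on that orbit, so the hypothesis says nothing about where $h$ sends it; ``pushing the contraction condition through $\xi_{\alpha_i}$'' therefore does not give contraction of the complement of a full neighborhood of $\mathbb{P}(H_h)$ in $\mathbb{P}(V_{\alpha_i})$. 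What is needed --- and what Benoist actually establishes --- is the separate lemma that $(r,\epsilon)$-proximality of $g$ on $\mathcal{F}_P$ forces $(r',\epsilon')$-proximality of each $\Lambda_{\alpha_i}(g)$ as an endomorphism of $V_{\alpha_i}$, with $\epsilon'\to0$ as $\epsilon\to0$ uniformly for fixed $r$; this is proved through the Cartan projection (the contraction rate of $\Lambda_{\alpha_i}(g)$ on all of $\mathbb{P}(V_{\alpha_i})$ is governed by the gap between its two top singular values, i.e.\ by $\alpha_i(a(g))$), not through the equivariant embeddings alone. Once that lemma is granted, your matrix computation does close the argument; as written, the proof is incomplete at its central step, and you should either quote that lemma from \cite{Be} or supply a proof of it.
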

By Proposition \ref{be}, for every $g\in G$ $(r,\epsilon)$-proximal on $\mathcal{F}_P$ we have,
\begin{equation}\label{EQQ}
|\varphi(a_P (g))-\varphi(\lambda_P(g)) +\varphi(\mathcal{G}_P (g_{-}^P , g_{+}^P ))|\leq \delta \|\varphi\|.
\end{equation}
Set
\[
\lambda_t (\gamma) = \frac{\varphi (\lambda_P(\rho(\gamma))) -L_P^{\varphi}t}{\sigma_P^{\varphi} \sqrt{t}},\ and \
\delta_t (\gamma) =\frac{\varphi(a_P(\rho(\gamma))) -\varphi (\lambda_P(\rho(\gamma)))}{\sigma_P^{\varphi} \sqrt{t}}.
\]
By Theorem \ref{T4}, $h_{\varphi}te^{-h_{\varphi}t} 
\# \{ [\gamma]\in [\Gamma]_t^{\varphi}: \lambda_t(\gamma)\in [a,b\}]$ converges to $\mathcal{N}(a, b)$.
We have to show that,
\[
h_{\varphi}te^{-h_{\varphi}t} 
\# \{ [\gamma]\in [\Gamma]_t^{\varphi}: \lambda_t(\gamma)+\delta_t (\gamma)\in [a,b\}] \rightarrow \mathcal{N}(a, b), 
\]
as $t\rightarrow \infty$.
Choose $\delta, r$ and $\epsilon$ as in Proposition \ref{be}.
We have seen in the proof of Lemma \ref{UD}, that given a conjugacy class, we can choose a representative $\gamma$ such that $\exp [\gamma_{-}, \gamma_{+}]_\varphi >r$. 
Thus, by Proposition \ref{sam} and (\ref{EQQ}), for all, but a finite number of the $\gamma$'s, are $(r, \epsilon)$-proximal $\gamma$ and,
\[
|\varphi(a_P (g))-\varphi(\lambda_P(g))+\log r | \leq 2\delta \|\varphi\|.
\]
This implies that for $t$ sufficiently large (depending on $\delta, r$ and $\sigma_P^\varphi$) we have $|\delta_t(\gamma)|\leq \delta$, and consequently, 
\[
\# \{ [\gamma]\in [\Gamma]_t^{\varphi}: |\delta_t(\gamma)|\leq \delta\}\geq
\# \{ [\gamma]\in [\Gamma]_t^{\varphi}: \gamma \ is \ (r, \epsilon)-proximal\}.
\]
Thus by Lemma \ref{UD},
\begin{equation}\label{E4}
h_{\varphi}te^{-h_{\varphi}t} 
\# \{ [\gamma]\in [\Gamma]_t^{\varphi}: |\delta_t(\gamma)|\leq \delta\} \longrightarrow 1,
\ as \ t \rightarrow \infty.
\end{equation}
Now, by (\ref{E4}) we get,
\begin{eqnarray*}
&& \liminf_{t\rightarrow \infty}
h_{\varphi}te^{-h_{\varphi}t} 
\# \{ [\gamma]\in [\Gamma]_t^{\varphi}: \lambda_t(\gamma)+\delta_t (\gamma)\in [a,b\}] \\
&\geq & \liminf_{t\rightarrow \infty}h_{\varphi}te^{-h_{\varphi}t} 
\# \{ [\gamma]\in [\Gamma]_t^{\varphi}: \lambda_t(\gamma)
\in 
[a+\delta, b-\delta] \ and \ |\delta_t(\gamma)|
\leq \delta\}\\
&=& \liminf_{t\rightarrow \infty}h_{\varphi}te^{-h_{\varphi}t} 
\# \{ [\gamma]\in [\Gamma]_t^{\varphi}: \lambda_t(\gamma)
\in [a+\delta, b-\delta]\}\\
&=& \mathcal{N}(a+\delta,b-\delta).
\end{eqnarray*}
For the $\limsup$ we have (using (\ref{E4}))
\begin{eqnarray*}
&& \limsup _{t\rightarrow \infty}h_{\varphi}te^{-h_{\varphi}t} 
\# \{ [\gamma]\in [\Gamma]_t^{\varphi}: \lambda_t(\gamma)+\delta_t (\gamma)\in [a,b\}]\\
&=& \limsup _{t\rightarrow \infty}h_{\varphi}te^{-h_{\varphi}t} 
h_{\varphi}te^{-h_{\varphi}t}\# \{ [\gamma]\in [\Gamma]_t^{\varphi}: \lambda_t(\gamma)
\in [a+\delta, b-\delta] \ and \ |\delta_t(\gamma)|
\leq \delta\}\\
&\leq &  \limsup _{t\rightarrow \infty}h_{\varphi}te^{-h_{\varphi}t} 
h_{\varphi}te^{-h_{\varphi}t}\# \{ [\gamma]\in [\Gamma]_t^{\varphi}: \lambda_t(\gamma)
\in [a+\delta, b-\delta]\}\\
&=& \mathcal{N}(a+\delta,b-\delta).
\end{eqnarray*}
Thus, since $\delta$ is arbitrary, we finally get,
\[
\lim _{t\rightarrow \infty}h_{\varphi}te^{-h_{\varphi}t} 
\# \{ [\gamma]\in [\Gamma]_t^{\varphi}: \lambda_t(\gamma)+\delta_t (\gamma)\in [a,b\}]=\mathcal{N}(a,b).
\]
This completes the proof of Theorem \ref{TT4}.

\end{proof}

\end{document}